\newcommand{\Z}{\ensuremath{\mathbb{Z}}}    
\newcommand{\C}{\ensuremath{\mathbb{C}}}    
\newcommand{\comL}[1]{[#1]}  
\let\xto\xrightarrow
\DeclareMathOperator{\Inf}{Inf}
\DeclareMathOperator{\Tra}{Tra}
\DeclareMathOperator{\Res}{Res}
\DeclareMathOperator{\Hom}{Hom}
\DeclareMathOperator{\id}{id}
\DeclareMathOperator{\im}{Im}
\newcommand{\Defn}[1]{{\emph{#1}}}
\newtheorem{thm}{Theorem}[section]
\newtheorem{cor}[thm]{Corollary}
\newtheorem{lem}[thm]{Lemma}
\newtheorem{prop}[thm]{Proposition}
\newtheorem{defn}[thm]{Definition}
\numberwithin{equation}{section}
\begin{document}

\title{Second cohomology of Lie rings and the Schur multiplier}
\author{Max Horn}
\author{Seiran Zandi}

\maketitle

\begin{abstract}
We exhibit an explicit construction for the second cohomology group 
$H^2(L, A)$ for a Lie ring $L$ and a trivial $L$-module $A$.
We show how the elements of $H^2(L, A)$ correspond one-to-one to the 
equivalence classes of central extensions of $L$ by $A$, where $A$
now is considered as an abelian Lie ring. For a finite Lie
ring $L$ we also show that $H^2(L, \C^*) \cong M(L)$, where $M(L)$ denotes the 
Schur multiplier of $L$. These results match precisely the analogue
situation in group theory.

\medskip

\noindent
\textit{Keywords:} Lie rings, Schur multiplier of Lie rings, central extension, second cohomology group of Lie rings.
\\
\textit{MSC(2010):} Primary: 17B56; Secondary: 20F40, 20D15.
\end{abstract}

\section{Introduction} \label{sec:intro}

Let $L$ be a Lie ring and $A$ a trivial $L$-module. Our first aim in this
paper is to give an explicit description of the cohomology group $H^2(L,A)$
and to show how its elements correspond one-to-one
to the equivalence classes of central extensions of the Lie algebra $L$
with the module $A$, where we regard $A$ as abelian Lie ring. We then prove the following first main result.

\begin{thm}[see Theorem~\ref{thm:exact-sequence}] \label{mainthm:exact-sequence}
  Let $H$ be a central ideal of $L$.
  Then there is an exact sequence
  \[ \pushQED{\qed}
  0 \to \Hom(L/H,A)\to\Hom(L,A)\to\Hom(H,A)\to H^2(L/H,A)\to H^2(L,A).
  \qedhere
  \]
\end{thm}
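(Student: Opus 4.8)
The plan is to imitate the classical derivation of the truncated five-term exact sequence from group cohomology, working throughout with the cocycle model of $H^2$ from the preceding section. First I would name the five maps. Write $\pi\colon L\to L/H$ for the quotient and $\iota\colon H\hookrightarrow L$ for the inclusion; then $\Hom(L/H,A)\to\Hom(L,A)$ is inflation $\bar\phi\mapsto\bar\phi\circ\pi$, $\Hom(L,A)\to\Hom(H,A)$ is restriction $\phi\mapsto\phi\circ\iota$, and $H^2(L/H,A)\to H^2(L,A)$ is inflation on cocycles, $[f]\mapsto\bigl[(x,y)\mapsto f(\pi x,\pi y)\bigr]$. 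The substance sits in the transgression $\Hom(H,A)\to H^2(L/H,A)$, which I would build as follows: since $H$ is a central ideal, the sequence $0\to H\to L\xrightarrow{\pi}L/H\to 0$ is itself a central extension of $L/H$ by $H$ (and $H$ is a trivial $L/H$-module since $[L,H]=0$, so $H^2(L/H,H)$ is defined in the sense of the paper), hence by the correspondence between central extensions and $H^2$ established earlier it represents a class $\varepsilon\in H^2(L/H,H)$, represented concretely by $\omega(\bar x,\bar y)=[s\bar x,s\bar y]-s[\bar x,\bar y]$ for a section $s$ of $\pi$. Postcomposition with $\psi\in\Hom(H,A)$ induces a coefficient-change map $\psi_*\colon H^2(L/H,H)\to H^2(L/H,A)$, and I set $\tau(\psi):=\psi_*(\varepsilon)$.

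With the maps fixed, exactness at $\Hom(L/H,A)$ is immediate from surjectivity of $\pi$, and exactness at $\Hom(L,A)$ is the standard observation that an additive map $L\to A$ (equivalently a Lie homomorphism, $A$ being abelian) vanishes on $H$ precisely when it factors through $\pi$. For exactness at $\Hom(H,A)$ I would argue: if $\psi=\phi\circ\iota$ then $\psi_*\omega(\bar x,\bar y)=[\phi s\bar x,\phi s\bar y]_A-(\phi\circ s)\bigl([\bar x,\bar y]\bigr)$, and the bracket term is $0$ because $A$ is abelian, so $\psi_*\omega$ is a coboundary and $\tau(\psi)=0$; conversely, if $\tau(\psi)=0$ the central extension of $L/H$ by $A$ obtained by pushing $\varepsilon$ out along $\psi$ splits, and a Lie-ring splitting yields a homomorphism $L\to A$ restricting to $\psi$. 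For exactness at $H^2(L/H,A)$: one has $\operatorname{inf}\circ\tau=0$ because the pullback of $\varepsilon$ along $\pi$ splits (the diagonal $L\to L\times_{L/H}L$ is a Lie-ring section); conversely, if $\operatorname{inf}[f]=0$ then the central extension $E$ classified by $f$ pulls back along $\pi$ to a split extension, producing a Lie homomorphism $\theta\colon L\to E$ over $\pi$ whose restriction $\theta|_H$ lands in the central copy of $A$ and so defines a $\psi\in\Hom(H,A)$, and comparing the cocycle of $E$ relative to the section $\theta\circ s$ with $\psi_*\omega$ gives $\tau(\psi)=[f]$.

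The main obstacle I expect is technical rather than conceptual: over $\Z$ the sequence $0\to H\to L\to L/H\to 0$ need not split as abelian groups, so $s$ is only a set-theoretic section, and $\omega$ as well as the $1$-cochains $\phi\circ s$ and $\theta\circ s$ must be interpreted within the precise normalization used by the explicit model of $H^2$ in the previous section; in particular one must verify that the additive defect of $s$ is carried along consistently, so that the coboundary computation in the step for $\Hom(H,A)$ is legitimate. Once that bookkeeping is settled, each remaining verification reduces to a short diagram chase or to the single fact that a bracket landing in the abelian Lie ring $A$ is zero. To keep the computations transparent I would organize the write-up around the extension picture, defining the transgression and the inflation map $H^2(L/H,A)\to H^2(L,A)$ by pushout and pullback of central extensions and reading the exactness statements off the resulting commutative squares, falling back on explicit cocycle representatives only when one is genuinely required.
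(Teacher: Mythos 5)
Your proposal is correct and follows essentially the same route as the paper: the five maps coincide (your $\tau(\psi)=\psi_*(\varepsilon)$ is, at the cocycle level, exactly the paper's $\Tra(\chi)=\overline{(\chi\circ f,\chi\circ g)}$), and the exactness arguments rest on the same correspondence between $H^2$ and central extensions, with your pushout/pullback splitting arguments playing the role of the paper's explicitly constructed homomorphism $\sigma$ and cochain computations. The point you flag yourself --- that in this model a cocycle is a pair $(f,g)$ recording both the bracket defect and the additive defect of the merely set-theoretic section --- is precisely the bookkeeping the paper carries out, and carrying the $g$-component along makes each of your steps go through as sketched.
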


The Schur multiplier $M(L)$ of a Lie ring $L$ is defined as $M(L)=H_2(L,\Z)$.
This has been investigated in \cite{Bak:2007ei} and \cite{Eick/Horn/Zandi}.
Our second main result here relates the second cohomology group of a Lie ring with its Schur
multiplier.

\begin{thm}[see Theorem~\ref{thm:H^2-from-FR}] \label{mainthm:H^2-from-FR}
  Let $L=F/R$ be a finite Lie ring, where $F$ is a free Lie ring
  and $R$ an ideal of $F$. Consider $\C^*$ as a trivial $L$-module.
  Then $H^2(L,\C^*)\cong (R \cap F^2) / [F,R]$.
  \qed
\end{thm}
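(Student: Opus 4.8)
The plan is to derive the isomorphism from the exact sequence of Theorem~\ref{mainthm:exact-sequence}, applied after a preliminary modification of the presentation. Since $R$ need not be central in $F$, I would first pass to $\bar F := F/[F,R]$ and $\bar R := R/[F,R]$: here $\bar R$ is an ideal of $\bar F$ because $R$ is an ideal of $F$, it is central since $[\bar F,\bar R]=0$ by construction, and $\bar F/\bar R\cong F/R=L$. Plugging this into Theorem~\ref{mainthm:exact-sequence} with $\C^*$ as trivial module yields an exact sequence
\[
0\to\Hom(L,\C^*)\to\Hom(\bar F,\C^*)\xrightarrow{\ \rho\ }\Hom(\bar R,\C^*)\xrightarrow{\ \tau\ }H^2(L,\C^*)\xrightarrow{\ \Inf\ }H^2(\bar F,\C^*).
\]
Everything then reduces to two points: showing that $\Inf$ is the zero map, and computing the cokernel of $\rho$.

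For the first point I would use that classes in $H^2(-,\C^*)$ correspond to equivalence classes of central extensions (as established in the earlier part of the paper) and that inflation corresponds to pullback. Let $0\to\C^*\to E\xrightarrow{\pi}L\to 0$ be a central extension and let $\sigma\colon\bar F\twoheadrightarrow L$ be the quotient map; the inflated extension splits iff $\sigma$ lifts through $\pi$ to a Lie ring homomorphism $\bar F\to E$. Since $F$ is free and $\pi$ is surjective, the composite $F\twoheadrightarrow\bar F\xrightarrow{\sigma}L$ lifts to some $\lambda\colon F\to E$. For $x\in F$, $r\in R$ one has $\lambda(r)\in\ker\pi$, which is central in $E$, so $\lambda([x,r])=[\lambda(x),\lambda(r)]=0$; hence $\lambda$ vanishes on $[F,R]=\ker(F\to\bar F)$ and factors through the desired lift $\bar F\to E$. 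Thus every class in $H^2(L,\C^*)$ inflates to zero, so by exactness $\tau$ is surjective and $H^2(L,\C^*)\cong\Hom(\bar R,\C^*)/\operatorname{im}\rho$.

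For the second point, since $\C^*$ is an abelian (trivial-bracket) Lie ring one has $\Hom(\bar F,\C^*)=\Hom(\bar F/\bar F^2,\C^*)$, and $\bar F/\bar F^2\cong F/F^2$ because $\bar F^2=F^2/[F,R]$. Under this identification $\rho$ is restriction along the composite $\bar R\hookrightarrow\bar F\twoheadrightarrow F/F^2$, which has kernel $\bar R\cap\bar F^2=(R\cap F^2)/[F,R]$. Because $\C^*$ is divisible, hence injective as a $\Z$-module, restriction of homomorphisms along a subgroup is always surjective; applying this first to extend functionals from $(\bar R+\bar F^2)/\bar F^2$ to $F/F^2$ (so that $\operatorname{im}\rho$ is exactly the set of homomorphisms on $\bar R$ killing $\bar R\cap\bar F^2$) and then to the inclusion $\bar R\cap\bar F^2\hookrightarrow\bar R$, one obtains
\[
H^2(L,\C^*)\;\cong\;\Hom(\bar R,\C^*)/\operatorname{im}\rho\;\cong\;\Hom\!\big((R\cap F^2)/[F,R],\C^*\big).
\]
Finally $(R\cap F^2)/[F,R]$ is the Schur multiplier $M(L)$ by the Hopf-type formula of \cite{Bak:2007ei}, \cite{Eick/Horn/Zandi}, and for finite $L$ it is a finite abelian group, for which $\Hom(-,\C^*)$ is (non-canonically) isomorphic to the group itself; hence $H^2(L,\C^*)\cong(R\cap F^2)/[F,R]$.

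The step I expect to require the most care is proving $\Inf=0$: Theorem~\ref{mainthm:exact-sequence} is only available for central ideals, so one is forced to work with $\bar F=F/[F,R]$ rather than with $F$ itself, and although $\bar F$ is no longer free, the freeness of $F$ together with centrality of $\C^*$ in $E$ is precisely what is needed to lift $\sigma$ and conclude. Beyond that, one must check that the extension description of $H^2$ is compatible with the maps in the sequence, and --- this being a theorem about Lie rings rather than Lie algebras over a field --- that the extension and lifting arguments, and the two appeals to injectivity of $\C^*$, are carried out over $\Z$.
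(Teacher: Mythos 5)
Your proposal is correct and follows essentially the same route as the paper: pass to the central extension $0\to R/[F,R]\to F/[F,R]\to L\to 0$, apply the five-term sequence, use freeness of $F$ to kill the last map (the paper instead proves surjectivity of $\Tra$ directly via its Lemmas~\ref{8} and~\ref{16}, which is the same lifting argument), and use divisibility of $\C^*$ twice to identify the cokernel of restriction with $\Hom\bigl((R\cap F^2)/[F,R],\C^*\bigr)$, exactly as in Proposition~\ref{15}. The only substantive difference is that you import finiteness of $(R\cap F^2)/[F,R]$ from the Schur-multiplier literature, whereas the paper derives it from Lemma~\ref{lem:L2-finite} (central ideal of finite index forces $\overline{F}^2$ finite) --- the same underlying fact from the same reference, so this is a matter of citation rather than a gap.
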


Combining this with \cite[Theorem 3.3.1]{Bak:2007ei} we obtain the following Corollary.

\begin{cor} \label{cor:ML=H^2}
 Let $L$ be a finite Lie ring. Then $M(L) = H_2(L,\Z) \cong H^2(L,\C^*)$.
 \qed
\end{cor}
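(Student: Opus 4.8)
The plan is simply to concatenate the two Hopf-type formulas that are already available. First I would fix a presentation $L = F/R$, where $F$ is a free Lie ring --- say, free on a generating set of $L$ --- and $R$ is the ideal of $F$ given by the kernel of the corresponding surjection; every Lie ring admits such a presentation. Since $L$ is finite by hypothesis, Theorem~\ref{mainthm:H^2-from-FR} applies to this presentation and yields an isomorphism
\[ H^2(L,\C^*) \;\cong\; (R\cap F^2)/[F,R]. \]

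Next I would invoke \cite[Theorem 3.3.1]{Bak:2007ei}, which is precisely the Lie-ring analogue of Hopf's formula: for the same presentation $F/R$ it gives
\[ M(L) \;=\; H_2(L,\Z) \;\cong\; (R\cap F^2)/[F,R], \]
where the first equality is just the definition of the Schur multiplier. Comparing the two displays --- both right-hand sides are literally the same subquotient of $F$ --- produces the asserted chain $M(L) = H_2(L,\Z) \cong H^2(L,\C^*)$, matching the familiar state of affairs for finite groups.

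Since the mathematical content is entirely carried by the two results being combined, there is no real obstacle here; the only point worth a remark is that the subquotient $(R\cap F^2)/[F,R]$ is a priori attached to the chosen presentation. This causes no difficulty for the statement as phrased: one fixes a single presentation $F/R$ and applies both theorems to it, and because each theorem independently identifies this subquotient with an intrinsic invariant of $L$ (namely $H_2(L,\Z)$, respectively $H^2(L,\C^*)$), the composite isomorphism is well defined without any need to check that the two identifications are compatible under a change of presentation.
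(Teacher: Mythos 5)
Your proposal is correct and is exactly the paper's argument: the corollary is obtained by fixing a presentation $L=F/R$ and combining Theorem~\ref{mainthm:H^2-from-FR} with the Hopf-type formula $H_2(L,\Z)\cong (R\cap F^2)/[F,R]$ from \cite[Theorem 3.3.1]{Bak:2007ei}. Nothing further is needed.
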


Our results have analogues in group theory and in the theory of Lie algebras.
But note that while Lie algebras are also Lie rings,
it is not true that all corresponding results on Lie algebras follow as a consequence of
our work. Indeed, the categories of Lie rings and $k$-Lie algebras (where $k$
is a field) are quite different. For example, they have different free objects,
so the notions of presentations do not coincide directly. Also, while every
central extension of Lie algebras is a central extension of Lie rings,
the converse is not true. For example, let $p$ be a prime and consider
the Lie algebra $L=\Z/p\Z$. Then the only central extension as a Lie algebra
of $L$ by itself is $L\times L$. But as a Lie ring, there is also the
central extension $\Z/p^2\Z$, which is not a vector space and hence not a Lie
algebra. Therefore, our proof of the correspondence between central extensions
and $H^2(L,A)$ does not imply the similar result for Lie algebras.

\medskip

We now briefly discuss the analogues of our results for groups and Lie algebras.

The five-term exact
sequence from Theorem~\ref{mainthm:exact-sequence} is analogue to a
special case of the well-known inflation-restriction exact sequence,
where the Lie ring $L$ and its central ideal $H$ are replaced by a group
with a central subgroup, respectively by a Lie algebra with a central
ideal.

In turn, Theorem~\ref{mainthm:H^2-from-FR} corresponds to a
well-known result for finite groups proved by Schur in \cite{Schur:1904dk,Schur:1907fg},
which is also known for Lie algebras, see e.g.\ \cite{Batten/Stitzinger:1996db}.

Finally, note that \cite[Theorem 3.3.1]{Bak:2007ei} we used above to establish
Corollary~\ref{cor:ML=H^2} is a Lie rings analogue
of the Hopf formula for groups, see \cite{Hopf}. Again there are
similar results for Lie algebras, see e.g.\ \cite[Section 7.5]{Weibel:1994}.
Thus if $X$ is a group, Lie algebra or Lie ring, and finite, then $H_2(X,\Z) \cong H^2(X,\C^*)$.


\section{Preliminaries} \label{sec:prelim}

We collect some definitions and facts that we use throughout this paper.
A Lie ring $L$ is an additive abelian group with a (not necessarily
associative) multiplication denoted by $\comL{\cdot,\cdot}$ satisfying
the following properties:
\begin{enumerate}
  \item
  $[x,x]=0$ for all $x\in L$ (anti-commutativity),
  \item
    $[x,[y,z]]+[z,[x,y]]+[y,[z,x]]=0$ for all $x,y,z\in L$
      (Jacobi identity),
  \item
  $[x+y,z]=[x,z]+[y,z]$ and $[x,y+z]=[x,y]+[x,z]$ for all
                   $x,y,z\in L$ (bilinearity).
\end{enumerate}
The product $\comL{x,y}$ is called the \Defn{commutator} of $x$ and $y$. 
Subrings, ideals and homomorphisms of Lie rings are defined as usual, and we write
$K\leq L$ and $I\unlhd L$ if $K$ is a subring and $I$ an ideal of $L$.
The \Defn{center} of $L$ is an ideal defined as
\[
Z(L):= \{ x\in L|\:[x,y]=0\:\text{for all } y\in L\}.
\]
If $L=Z(L)$ then $L$ is called \Defn{abelian}.

Given a Lie ring $L$ and two Lie subrings $U$ and $V$, we define $[U,V]$ as
the Lie subring of $L$ generated by all commutators $[u,v]$ with $u \in U$
and $v \in V$. We also write $U^2$ instead of $[U,U]$.

Free Lie rings and (finite) presentations of Lie rings
are again defined as usual, see \cite[§5.3]{khukhro} for details.

\section{Second cohomology of Lie rings} \label{sec:2nd-cohom}

In this section we give an explicit construction of
the second cohomology group of Lie rings.
Throughout this section $L$ is a Lie ring and $A$ is a trivial $L$-module.

\begin{defn}\label{defn:lie-ring-cocycles}
Let $Z^2(L,A)$ be the set of all pairs of  functions $(f,g)$,
where $f,g : L\times L \to A$ satisfy the following conditions for all $x,y,z\in L$:

\begin{enumerate}
\item \label{defn:cocycle-bilinear-f}
      $f(x+y , z)=f(x,z)+f(y,z)+g(\comL{x,z},\comL{y,z})$ and \\
      $f(x,y+z)=f(x,y)+f(x,z)+g(\comL{x,y},\comL{x,z})$.
\item \label{defn:cocycle-anti-comm-f} $f(x,x)=0$.
\item \label{defn:cocycle-jacobi}
      $f(x,\comL{y,z})+f(z,\comL{x,y})+f(y,\comL{z,x})=
       -g( \comL{x,\comL{y,z}},\comL{z,\comL{x,y}})
       -g(-\comL{y,\comL{z,x}},\comL{y,\comL{z,x}})$.
\item \label{defn:cocycle-associative-g}
      $g(x+y,z)+g(x,y)=g(x,y+z)+g(y,z)$.
\item \label{defn:cocycle-comm-g}
      $g(x,y)=g(y,x)$.
\end{enumerate}
We shall refer to the elements of $Z^2(L,A)$ as \Defn{cocycles} of $L$ in $A$.
\end{defn}
Given two cocycles $(f,g), (f',g')\in Z^2(L,A)$, we define their sum by
\[ (f,g)+(f',g'):=(f+f',g+g'), \]
where $f+f'$ and $g+g'$ denote pointwise addition of the two functions.
It is now straightforward to verify that the set $Z^2(L,A)$ endowed with
this operation is an abelian group.

\begin{defn}
Let $t:L\to A$ be a map such that $t(0)=0$. We define
  \[
  \begin{array}{lll}
  f:& L\times L \to A ,& (x,y) \mapsto -t([x,y]), \\
  g:& L\times L \to A ,& (x,y) \mapsto  t(x)+t(y)-t(x+y);
  \end{array}
  \]
  and refer to the pair $(f,g)$ as a \Defn{coboundary} of $L$ in $A$, and denote the
  set of all coboundaries by $B^2(L,A)$.
\end{defn}
One can readily verify that $B^2(L,A)$ is a a subset and in fact a subgroup of $Z^2(L,A)$.
 The factor group $H^2(L,A):={Z^2(L,A)}/{B^2(L,A)}$ is called the \Defn{second cohomology group of $L$ with coefficients in $A$},
  the elements $H^2(L,A)$ are called \Defn{cohomology classes}. Any two cocycles contained in the same
 cohomology class are said to be \Defn{cohomologous}.
Given $(f,g)\in Z^2(L,A)$ we write $\overline{(f,g)}$ to denote the cohomology class containing $(f,g)$.

\section{From central extensions to cohomology and back} \label{sec:centext-to-cohom}

Let $L$ be a Lie ring and $A$ an abelian Lie ring.
In this section we study the connection between equivalence classes of
central extensions of $L$ by $A$ on the one hand, and the
cohomology group $H^2(L,A)$ on the other hand, where $A$ is regarded
as a trivial $L$-module.
\medskip

A \Defn{central extension} of $L$ by $A$ is a short exact sequence
\[\begin{CD}
 E:\quad @. 0 @>>> A @>{\alpha}>> B @>{\beta}>> L @>>> 0\\
 \end{CD}
\]
of Lie rings such that $\alpha(A)$ is contained in the center of $B$.
A \Defn{section} of $\beta$ is a map $\lambda:L\to B$ such that $\beta\circ \lambda=\id_L$.
If a section of $\beta$ exists that is a homomorphism,
we say that the extension $E$ is \Defn{split}.

Two central extensions $E$ and $E'$  of $L$ by $A$ are said to be \Defn{equivalent}
if there exists a Lie ring homomorphism $\gamma$ which makes the
following diagram commutative:
\[
\begin{CD}
  E:\quad  @. 0 @>>> A  @>{\alpha}>>  B            @>{\beta}>>  L  @>>> 0 \\
  @.@.     @|               @VV{\gamma}V              @|      @.\\
  E':\quad @. 0 @>>> A  @>{\alpha'}>> B'           @>{\beta'}>> L  @>>> 0 \\
\end{CD}
\]\\
By the well-known Five Lemma (whose standard proof in the category
of groups via diagram chasing transfers almost verbatim to the category of Lie rings)
it follows that $\gamma$ is an isomorphism.

Observe that any central extension is equivalent to one in which $\alpha$ is simply the inclusion map. So we will from now on always assume that:
\[\begin{CD}
  E:\quad  @. 0 @>>> A  @>{i}>>  B  @>{\beta}>>  L  @>>> 0
\end{CD}
\]
where $i$ denotes the inclusion map.
Let $\lambda$ denote a section of $\beta$ such that $\lambda(0)=0$.
Since $\beta([\lambda(x),\lambda(y)])=\beta(\lambda[x,y])$, we find that
$[\lambda(x),\lambda(y)]$ and $\lambda[x,y]$ belong to the same coset of $A$. Similarly, we can argue
that $\lambda(x)+\lambda(y)$ and $\lambda(x+y)$ are in the same coset of $A$.
Hence we may define the following maps:
  \[
  \begin{array}{lll}
  f:& L\times L \to A ,& (x,y) \mapsto [\lambda(x),\lambda(y)] - \lambda[x,y], \\
  g:& L\times L \to A ,& (x,y) \mapsto \lambda(x)+\lambda(y) - \lambda(x+y)
  .
  \end{array}
  \]
Intuitively, these two maps measure how far away $\lambda$ is from being a Lie ring homomorphism.

\begin{lem}
 $(f,g)$ is a cocycle.
\end{lem}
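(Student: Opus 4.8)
The plan is to verify each of the five defining conditions of Definition~\ref{defn:lie-ring-cocycles} directly, by substituting the formulas
\[
f(x,y) = [\lambda(x),\lambda(y)] - \lambda[x,y], \qquad g(x,y) = \lambda(x)+\lambda(y)-\lambda(x+y)
\]
and repeatedly using three facts: $A$ is central in $B$ (so elements of $A$ commute with everything and the bracket is linear over them), $\beta$ is a homomorphism with $\beta\circ\lambda = \id_L$, and $B$ itself satisfies the Lie ring axioms. The key bookkeeping device is that for any $x,y\in L$ we may write $\lambda(x+y) = \lambda(x)+\lambda(y) - g(x,y)$ and $\lambda[x,y] = [\lambda(x),\lambda(y)] - f(x,y)$, with $f(x,y), g(x,y) \in A$ central; I would substitute these expansions into the left-hand sides and simplify.

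First I would check condition~\ref{defn:cocycle-associative-g}: expand $g(x+y,z)+g(x,y)$ and $g(x,y+z)+g(y,z)$ using the formula for $g$, and observe both sides equal $\lambda(x)+\lambda(y)+\lambda(z) - \lambda(x+y+z)$ after cancellation — this is just associativity of addition in $B$. Condition~\ref{defn:cocycle-comm-g} is immediate from commutativity of $+$ in $B$, and condition~\ref{defn:cocycle-anti-comm-f} follows since $f(x,x) = [\lambda(x),\lambda(x)] - \lambda[x,x] = 0 - \lambda(0) = 0$ by anti-commutativity in $B$ and $\lambda(0)=0$. For condition~\ref{defn:cocycle-bilinear-f}, I would compute $f(x+y,z)$ by substituting $\lambda(x+y) = \lambda(x)+\lambda(y)-g(x,y)$ into both $[\lambda(x+y),\lambda(z)]$ and $\lambda[x+y,z] = \lambda([x,z]+[y,z])$; the bracket $[-g(x,y),\lambda(z)]$ vanishes since $g(x,y)\in A$ is central, and expanding $\lambda([x,z]+[y,z]) = \lambda[x,z]+\lambda[y,z] - g([x,z],[y,z])$ produces exactly the correction term $g([x,z],[y,z])$, so that $f(x+y,z) = f(x,z)+f(y,z)+g([x,z],[y,z])$. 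The second identity in~\ref{defn:cocycle-bilinear-f} is symmetric.

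The main obstacle is condition~\ref{defn:cocycle-jacobi}, the Jacobi-type relation, since it mixes both $f$ and $g$ and requires careful tracking of nested brackets. The strategy is to start from the Jacobi identity in $B$ applied to $\lambda(x),\lambda(y),\lambda(z)$, namely $[\lambda(x),[\lambda(y),\lambda(z)]] + [\lambda(z),[\lambda(x),\lambda(y)]] + [\lambda(y),[\lambda(z),\lambda(x)]] = 0$, and then in each inner bracket replace $[\lambda(y),\lambda(z)]$ by $\lambda[y,z] + f(y,z)$ (with $f(y,z)\in A$ central, so the outer bracket $[\lambda(x),f(y,z)] = 0$). This rewrites the left side as $[\lambda(x),\lambda[y,z]] + [\lambda(z),\lambda[x,y]] + [\lambda(y),\lambda[z,x]]$, and then each term $[\lambda(x),\lambda[y,z]] = \lambda[x,[y,z]] + f(x,[y,z])$. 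Collecting, $\sum f(x,[y,z]) = -\big(\lambda[x,[y,z]] + \lambda[z,[x,y]] + \lambda[y,[z,x]]\big)$, so the remaining task is to express the sum $\lambda(u)+\lambda(v)+\lambda(w)$ where $u+v+w = 0$ (by the Jacobi identity in $L$) in terms of $g$; iterating $g$'s cocycle property and using $\lambda(0)=0$ shows $\lambda(u)+\lambda(v)+\lambda(w) = g(u,v) + g(u+v,w) = g(u,v)+g(-w,w)$, which with $u = [x,[y,z]]$, $v = [z,[x,y]]$, $w = [y,[z,x]]$ yields precisely the right-hand side of~\ref{defn:cocycle-jacobi}. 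I would double-check the signs in the $g(-w,w)$ term against the stated $-g(-[y,[z,x]],[y,[z,x]])$, which is the one spot where a sign error is most likely to creep in.
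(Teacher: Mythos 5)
Your proposal is correct and follows essentially the same route as the paper: direct verification of each condition by substituting the defining formulas, using centrality of $A$ in $B$ for conditions (1)--(2) and the Jacobi identities in both $B$ and $L$ for condition (3), where your sign bookkeeping $\lambda(u)+\lambda(v)+\lambda(w)=g(u,v)+g(-w,w)$ (for $u+v+w=0$ and $\lambda(0)=0$) does check out and matches the stated right-hand side. In fact you spell out this last step more explicitly than the paper does, but the argument is the same.
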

\begin{proof}
It is easy to show that $(f,g)$ has properties $(2)$, $(4)$ and $(5)$.
To verify the property $(1)$, consider $x,y$ and $z\in L$. By using that $\im g\in A\leq Z(L)$, we get
  \begin{align*}
    f(x,y+z)
    &= [\lambda(x),\lambda(y+z)]-\lambda[x,y+z] \\
    &= [\lambda(x),\lambda(y)+\lambda(z)-g(y,z)]-\lambda([x,y]+[x,z]) \\
    &= [\lambda(x),\lambda(y)]+[\lambda(x),\lambda(z)]-\lambda[x,y]-\lambda[x,z] + g([x,y],[x,z]) \\
    &= f(x,y)+f(x,z)+g([x,y],[x,z]).
  \end{align*}
Similarly we obtain $f(x+y,z)=f(x,z)+f(y,z)+g([x,z],[x,y])$. It remains to verify the condition $(3)$.
Since $L$ and $B$ are Lie rings, using the Jacobi identity, we have:
  \begin{align*}
    & f(x,[y,z])+f(z,[x,y])+f(y,[z,x]) \\
    &= [\lambda(x),[\lambda(y),\lambda(z)]]+[\lambda(z),[\lambda(x),\lambda(y)]]+[\lambda(y),[\lambda(z),\lambda(y)]]\\
      &\qquad\qquad-\lambda[x,[y,z]]-\lambda[z,[x,y]]-\lambda[y,[z,x]]\\
    &= -g([x,[y,z]],[z,[x,y]])-g(-[y,[z,x]],[y,[z,x]]).
  \end{align*}
Thus $(f,g)$ is a cocycle as claimed.
\end{proof}
\begin{lem}
 The cohomology class of $(f,g)$ does not depend on the choice of $\lambda$.
\end{lem}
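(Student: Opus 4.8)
The plan is to take two sections $\lambda$ and $\lambda'$ of $\beta$, both normalized so that $\lambda(0)=\lambda'(0)=0$, and to show that the associated cocycles $(f,g)$ and $(f',g')$ differ by a coboundary. First I would observe that since $\beta(\lambda(x)) = \beta(\lambda'(x)) = x$ for every $x \in L$, the difference $t(x) := \lambda(x) - \lambda'(x)$ lies in $\ker\beta = i(A) = A$ (identifying $A$ with its image), so $t$ defines a map $t : L \to A$ with $t(0) = 0$. This $t$ is exactly the kind of map appearing in the definition of a coboundary, so the natural claim is that $(f,g) - (f',g')$ is the coboundary $(f_t, g_t)$ with $f_t(x,y) = -t([x,y])$ and $g_t(x,y) = t(x) + t(y) - t(x+y)$.

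Next I would carry out the two verifications. For the $g$-component: $g(x,y) - g'(x,y) = (\lambda(x) - \lambda'(x)) + (\lambda(y) - \lambda'(y)) - (\lambda(x+y) - \lambda'(x+y)) = t(x) + t(y) - t(x+y) = g_t(x,y)$, which is immediate from bilinearity of the abelian group operation. For the $f$-component I would write
\[
f(x,y) - f'(x,y) = \bigl([\lambda(x),\lambda(y)] - [\lambda'(x),\lambda'(y)]\bigr) - \bigl(\lambda[x,y] - \lambda'[x,y]\bigr),
\]
and note the second bracket equals $t([x,y])$. For the first bracket, substitute $\lambda(x) = \lambda'(x) + t(x)$ and $\lambda(y) = \lambda'(y) + t(y)$ and expand the commutator using bilinearity; since $t(x), t(y) \in A \le Z(B)$, all terms involving a $t$ vanish, leaving $[\lambda'(x),\lambda'(y)]$. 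Hence the first bracket is zero and $f(x,y) - f'(x,y) = -t([x,y]) = f_t(x,y)$.

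Putting these together gives $(f,g) - (f',g') = (f_t, g_t) \in B^2(L,A)$, so $(f,g)$ and $(f',g')$ are cohomologous and define the same class in $H^2(L,A) = Z^2(L,A)/B^2(L,A)$. I do not expect a genuine obstacle here; the only points requiring a little care are checking that $t$ indeed lands in $A$ (which uses that both maps are sections of the same $\beta$) and using centrality of $A$ in $B$ to kill the cross terms in the commutator expansion — both of which are routine once set up. The argument is the exact Lie-ring analogue of the standard group-cohomology computation.
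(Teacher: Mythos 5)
Your argument is correct and is essentially the paper's own proof: write one section as the other plus a map $t:L\to A$ with $t(0)=0$, use centrality of $A$ in $B$ to kill the cross terms in the commutator, and conclude that the two cocycles differ exactly by the coboundary associated with $t$. The only (immaterial) difference is the sign convention of which section you subtract from which.
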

\begin{proof}
Assume that $\mu$ is another section of $\beta$ such that $\mu(0)=0$.
For all $x \in L$, since $\mu(x)$ and $\lambda(x)$ belong to the same coset of $A$, there is
a function $t:L\to A$ such that $\mu(x)=t(x)+\lambda(x)$. If $(f',g')$ denotes the cocycle
corresponding to $\mu$, then
  \begin{align*}
    f'(x,y)
    &= [\mu(x),\mu(y)]-\mu[x,y] \\
    &= [t(x)+\lambda(x),t(y)+\lambda(y)]-(t+\lambda)[x,y]\\
    &= [\lambda(x),\lambda(y)]-t[x,y]-\lambda[x,y]\\
    &= f(x,y)-t[x,y],
  \end{align*}
and of course $g'(x,y)=g(x,y)+t(x)+t(y)-t(x+y)$. Thus $(f,g)$ and $(f',g')$ are cohomologous.
\end{proof}
It follows that every extension $E$ of $L$ by $A$ determines a unique cohomology class $C_E$ in $H^2(L,A)$.
We will now establish that this actually yields a bijection between the equivalence classes of extensions and the
cohomology classes.
We need the following result which is readily proved.
\begin{prop}
Suppose $E$ and $E'$ are central extensions.
Then $E$ and $E'$ are equivalent if and only if $C_E=C_{E'}$ holds.
\end{prop}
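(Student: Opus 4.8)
The plan is to establish the two directions of the biconditional separately. For the forward direction, suppose $E$ and $E'$ are equivalent via a Lie ring homomorphism $\gamma\colon B\to B'$ with $\gamma\circ i=i'$ and $\beta'\circ\gamma=\beta$. Pick a section $\lambda$ of $\beta$ with $\lambda(0)=0$ and let $(f,g)$ be the associated cocycle. Then $\mu:=\gamma\circ\lambda$ is a section of $\beta'$ with $\mu(0)=0$, since $\beta'\circ\mu=\beta'\circ\gamma\circ\lambda=\beta\circ\lambda=\id_L$. I would then compute the cocycle $(f',g')$ attached to $\mu$: because $\gamma$ is a Lie ring homomorphism fixing $A$ pointwise (via the identifications $i$, $i'$), we get $f'(x,y)=[\mu(x),\mu(y)]-\mu[x,y]=\gamma([\lambda(x),\lambda(y)]-\lambda[x,y])=\gamma(f(x,y))=f(x,y)$, and similarly $g'(x,y)=g(x,y)$. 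Hence $(f',g')=(f,g)$, and since by the previous lemma $C_{E'}$ is independent of the choice of section, $C_{E'}=\overline{(f',g')}=\overline{(f,g)}=C_E$.

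For the converse, suppose $C_E=C_{E'}$. Choose sections $\lambda$ of $\beta$ and $\lambda'$ of $\beta'$, both sending $0$ to $0$, with associated cocycles $(f,g)$ and $(f',g')$. Since $C_E=C_{E'}$, these cocycles are cohomologous, so there is a map $t\colon L\to A$ with $t(0)=0$ such that $f'=f-t\circ[\cdot,\cdot]$ (as functions $L\times L\to A$) and $g'(x,y)=g(x,y)+t(x)+t(y)-t(x+y)$. By the lemma on independence of the section, I may replace $\lambda'$ by the section $\mu'(x):=t(x)+\lambda'(x)$ of $\beta'$ (again $\mu'(0)=0$), which by the computation in that lemma has associated cocycle exactly $(f',g')+(\text{coboundary of }t)=(f,g)$. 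So, after this adjustment, I may assume $\lambda$ and $\lambda'$ give literally the same cocycle $(f,g)$. Now define $\gamma\colon B\to B'$ as follows: every $b\in B$ can be written uniquely as $b=a+\lambda(\beta(b))$ with $a\in A$ (since $\lambda$ is a section and $A=\ker\beta$), and I set $\gamma(b):=a+\lambda'(\beta(b))$, i.e.\ $\gamma(a+\lambda(x))=a+\lambda'(x)$.

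It then remains to check that $\gamma$ is a Lie ring homomorphism making the diagram commute. Commutativity is immediate from the definition: $\gamma(i(a))=\gamma(a+\lambda(0))=a+\lambda'(0)=i'(a)$, and $\beta'(\gamma(a+\lambda(x)))=\beta'(a+\lambda'(x))=x=\beta(a+\lambda(x))$. For additivity, using $\lambda(x)+\lambda(y)=\lambda(x+y)+g(x,y)$ and the same identity for $\lambda'$ with the \emph{same} $g$, one gets $\gamma\big((a+\lambda(x))+(b+\lambda(y))\big)=\gamma\big(a+b+g(x,y)+\lambda(x+y)\big)=a+b+g(x,y)+\lambda'(x+y)=(a+\lambda'(x))+(b+\lambda'(y))$; here one uses that $g(x,y)\in A$ is central and hence just adds. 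The bracket is handled the same way via $[\lambda(x),\lambda(y)]=\lambda[x,y]+f(x,y)$ and the centrality of $A$: $[\,a+\lambda(x),\,b+\lambda(y)\,]=[\lambda(x),\lambda(y)]=\lambda[x,y]+f(x,y)$, which $\gamma$ sends to $\lambda'[x,y]+f(x,y)=[\lambda'(x),\lambda'(y)]=[\,a+\lambda'(x),\,b+\lambda'(y)\,]$. The main obstacle, though entirely routine, is bookkeeping: making sure the normalization step (replacing $\lambda'$ by $\mu'$) really does reduce to the case of a common cocycle, and keeping track of the identifications of $A$ inside $B$ and $B'$ so that $g$ and $f$ genuinely agree rather than merely being cohomologous. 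Once that reduction is in place, the verification that $\gamma$ respects $+$ and $[\cdot,\cdot]$ is a direct consequence of the defining cocycle identities together with $\im f,\im g\subseteq A\leq Z(B')$.
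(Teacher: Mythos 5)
The paper itself offers no proof of this proposition (it is dismissed as ``readily proved''), and your argument is the standard one and is essentially correct: for the forward direction you transport a section along the equivalence $\gamma$ and use that $\gamma$ fixes $A$ pointwise to get the \emph{same} cocycle; for the converse you normalize the two sections to share a common cocycle and then define $\gamma(a+\lambda(x))=a+\lambda'(x)$, checking compatibility with $+$ and $[\cdot,\cdot]$ via $\im f,\im g\subseteq A\leq Z(B')$ and the uniqueness of the decomposition $b=a+\lambda(\beta(b))$. One sign slip in the normalization step: with your stated convention $f'=f-t[\cdot,\cdot]$ and $g'=g+\delta t$ (where $\delta t(x,y)=t(x)+t(y)-t(x+y)$), i.e.\ $(f',g')=(f,g)+(\text{coboundary of }t)$, the modified section $\mu'=t+\lambda'$ has cocycle $(f',g')+(\text{coboundary of }t)=(f,g)+2(\text{coboundary of }t)$, which is not $(f,g)$ in general. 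You should instead take $\mu'=-t+\lambda'$, or equivalently write the cohomologous relation the other way around, defining $t$ by $(f,g)=(f',g')+(\text{coboundary of }t)$. With that trivial correction the reduction to a common cocycle is valid and the remaining verifications go through exactly as you wrote them.
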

Finally, we may connect equivalence classes of extensions with the cohomology classes.

\begin{prop}\label{prop:h^2-vs-centext}
 The assignment $E \mapsto C_E$ determines a bijective correspondence between the equivalence classes of central
extensions of $L$ by $A$, and the second cohomology group $H^2(L,A)$. Furthermore, if $E$ is a split extension,
then $C_E= 0 + B^2(L,A)$.
\end{prop}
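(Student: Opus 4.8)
The plan is to assemble the pieces already in place and reduce the proposition to one substantive point. The two preceding lemmas show that $E\mapsto C_E$ is well defined on equivalence classes, and the preceding proposition shows that $C_E=C_{E'}$ implies $E$ and $E'$ are equivalent; together these give that the assignment is well defined and injective. Hence the whole statement follows once I prove (i) that every cohomology class is of the form $C_E$ for some central extension $E$ (surjectivity), and (ii) that a split extension yields the zero class. I would treat (i) by an explicit construction and (ii) by a one-line observation.

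For surjectivity, fix a cocycle $(f,g)\in Z^2(L,A)$. First I would record the normalisations forced by the axioms: condition (2) gives $f(0,0)=0$; condition (1) then gives $g(0,0)=0$; condition (4) forces $g(0,x)=g(x,0)=0$ for all $x$; and condition (3) applied with $y=z=0$ forces $f(x,0)=f(0,x)=0$ for all $x$. With these in hand, set $B:=A\times L$ as a set and define
\[ (a,x)+(b,y):=(a+b+g(x,y),\,x+y), \qquad [(a,x),(b,y)]:=(f(x,y),\,[x,y]). \]
I would then check that $B$ is a Lie ring: $(B,+)$ is an abelian group with neutral element $(0,0)$ and $-(a,x)=(-a-g(x,-x),\,-x)$, where associativity of $+$ is precisely cocycle condition (4) and commutativity is condition (5); bilinearity of the bracket is precisely condition (1); anti-commutativity is $[(a,x),(a,x)]=(f(x,x),0)=(0,0)$ by condition (2); and the Jacobi identity in $B$ follows from condition (3) together with the normalisations above.

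Next I would exhibit the extension. The map $\beta\colon B\to L$, $(a,x)\mapsto x$, is a surjective Lie ring homomorphism with kernel $\{(a,0):a\in A\}$, and $i\colon A\to B$, $a\mapsto (a,0)$, is an injective homomorphism (using $g(0,0)=0$) with image exactly $\ker\beta$; moreover $i(A)$ is central in $B$, since $[i(a),(b,y)]=(f(0,y),0)=0$ by the normalisation $f(0,\cdot)=0$. Thus $E\colon 0\to A\xrightarrow{i}B\xrightarrow{\beta}L\to 0$ is a central extension. Choosing the section $\lambda(x):=(0,x)$, which satisfies $\lambda(0)=0$, and unwinding the definitions of $f$ and $g$ attached to $E$ by $\lambda$ — using $-(0,w)=(-g(w,-w),-w)$ — one finds that this cocycle equals $(f,g)$ exactly, so $C_E=\overline{(f,g)}$ and the assignment is onto. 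For the final assertion, if $E$ is split let $\lambda$ be a homomorphic section; then $\lambda(0)=0$ and, since $\lambda$ respects both the addition and the bracket, the associated $f$ and $g$ vanish identically, so $C_E=\overline{(0,0)}=0+B^2(L,A)$ (the choice of section being irrelevant by the second lemma).

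I expect the Jacobi identity for $B$ to be the main obstacle: it is the only place cocycle condition (3) enters, and matching the two $g$-terms on its right-hand side against the bracket-twist coming from $f$ requires some care, in particular using symmetry of $g$ and the vanishing of $g$ on pairs involving $0$. Everything else is routine bookkeeping, so I would dispatch the normalisation identities first (especially $f(x,0)=0$, which is what makes $i(A)$ central) and keep them available throughout the remaining verifications.
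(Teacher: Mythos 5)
Your argument is correct and takes essentially the same route as the paper: injectivity from the preceding proposition, surjectivity via the twisted Lie ring structure on $B=A\times L$ defined by $(f,g)$ together with the section $\lambda(x)=(0,x)$, and the observation that a homomorphic section yields the zero class; your normalisation identities, inverse formula and centrality check merely make explicit what the paper leaves implicit. (One trivial slip: $f(0,x)=0$ follows most directly from condition (1) with $x=y=0$, not from condition (3) with $y=z=0$, which only gives $f(x,0)=0$.)
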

\begin{proof}
From the preceding proposition, we know that the map from equivalence
classes of central extensions to cohomology classes is injective.
It remains to show that it is also surjective.

To see this, consider an arbitrary cocycle $(f,g)\in Z^2(L,A)$.
Then we define an addition and a bracket on the set $B=A\times L$ by putting
  \begin{align*}
    (a,x)+(b,y) &:=(a+b+g(x,y),x+y), \\
    [(a,x),(b,y)] &:=(f(x,y),[x,y]),
  \end{align*}
for any $(a,x),(b,y) \in B$.
First we check that $(B,+,\comL{\cdot,\cdot})$ is a Lie ring.
Suppose that $(a,x)$, $(b,y)$ and $(c,z)$ are in $B$.
That $+$ defines an associative and commutative binary operation follows from $(4)$ and $(5)$.
The bracket operation is bilinear on the left:
  \begin{align*}
    [(a,x)+(b,y),(c,z)]
    &= [(a+b+g(x,y),x+y),(c,z)]\\
    &= (f(x+y,z),[x+y,z]) \\
    &=
       (f(x,z)+f(y,z)+g([x,z],[y,z]),[x,z]+[y,z])\\
    &= (f(x,z),[x,z])+(f(y,z),[y,z])\\
    &= [(a,x),(c,z)]+[(b,y),(c,z)]
    .
  \end{align*}
Similarly we can show $[(a,x),(b,y)+(c,z)]=[(a,x),(b,y)]+[(a,x),(c,z)]$.
Hence the bracket operation is bilinear. We next turn our attention to the Jacobi identity.
Since $L$ is a Lie ring, we have
  \begin{align*}
    &\quad
       [(a,x),[(b,y),(c,z)]] +
       [(c,z),[(a,x),(b,y)]] +
       [(b,y),[(c,z),(a,x)]]\\
    &= [(a,x),(f(y,z),[y,z])] +
       [(c,z),(f(x,y),[x,y])] +
       [(b,y),(f(z,x),[z,x])]\\
    &= (f(x,[y,z]),[x,[y,z]]) +
       (f(z,[x,y]),[z,[x,y]]) +
       (f(y,[z,x]),[y,[z,x]])\\
    &= (f(x,[y,z])+f(z,[x,y])+g([x,[y,z]],[z,[x,y]]),[x,[y,z]]+[z,[x,y]])\\
    &\quad +(f(y,[z,x]),[y,[z,x]])\\
    &
      =(-f(y,[z,x])-g(-[y,[z,x]],[y,[z,x]]),-[y,[z,x]])
       +(f(y,[z,x]),[y,[z,x]])\\
    &= (0,0).
  \end{align*}
Finally by the condition $(2)$,
  \[
    [(a,x),(a,x)] =(f(x,x),[x,x]) = (0, 0).
  \]
We conclude that $B$ is a Lie ring. Now  $\{(a,0) \mid a\in A\}$ is an isomorphic image of $A$ contained in
the center of $B$. Identifying $A$ with this image, we obtain a
central extension of $L$ by $A$:
\[
\begin{CD}
  E:\quad  @. 0 @>>> A  @>>>  B   @>{\beta}>>  L  @>>> 0, \\
\end{CD}
\]
where $\beta(a,x)=x$ for all $a\in A$ and $x\in L$. Let $\lambda:L\to B$ be the
section of $B$ defined by $\lambda(x)=(0,x)$. Then by definition of addition and bracket
on $B$, we see that $(f,g)$ is the cocycle
corresponding to $E$. We conclude that the map sending ${E}$ to $C_E$ is surjective,
hence bijective.

Finally, assume that $E$ is a split extension. Then there is a homomorphism
$\alpha:L\to B$ which is a section of $B$. Accordingly the cocycle corresponding to
$\alpha$ is a coboundary, hence $C_E=0 + B^2(L,A)$.
\end{proof}

\section{An exact sequence for Lie rings} \label{sec:hs-seq}

In this section we show that for a central ideal $H$ of $L$,
the following sequence of Lie rings is exact:
\[ 0 \to \Hom(L/H,A)\to\Hom(L,A)\to\Hom(H,A)\to H^2(L/H,A)\to H^2(L,A). \]
We also construct the connecting maps explicitly. Note that this sequence
is a Lie ring analogue for the well-known 5-term short exact sequence for groups. 

\medskip

Let $L\cong F/R$ be a presentation of $L$, that is $F$ is a free Lie
ring and $R\unlhd F$. Since $R/[F,R]$ is a central ideal of $F/[F,R]$,
the short exact sequence
\[
\begin{CD}
  0 @>>> R/[F,R] @>>> F/[F,R]     @>>>          L    @>>> 0 \\
\end{CD}
\]
is a central extension.
\begin{lem}\label{8}
 Suppose $0\to A\to B \xto\varphi C\to 0$ is a central extension and
$\alpha:L\to C$ is a homomorphism. Then there is a homomorphism $\beta:F/[F,R]\to B$
making the following diagram commutative:
\[
\begin{CD}
  0 @>>> R/[F,R] @>>> F/[F,R]     @>>>          L    @>>> 0 \\
  @.     @VVV         @VV{\beta}V               @VV{\alpha}V      @.\\
  0 @>>> A       @>>> B           @>{\varphi}>> C    @>>> 0 \\
\end{CD}
\]
Here the map from $R/[F,R]$ to $A$ is the restriction of $\beta$ to $R/[F,R]$.

\end{lem}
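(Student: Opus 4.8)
The plan is to construct $\beta$ by first defining it on a free generating set of $F/[F,R]$ using the freeness of $F$, then checking that it has the required properties: namely that it makes the diagram commute and that its restriction to $R/[F,R]$ lands in $A$. Concretely, let $\pi:F\to F/R\cong L$ be the quotient map and $q:F\to F/[F,R]$ the other quotient. Let $X$ be a free generating set of $F$. For each $x\in X$, the element $\alpha(\pi(x))\in C$ can be lifted (as an element of a set, not a homomorphism) to some $b_x\in B$ via the surjection $\varphi$, since $\varphi$ is onto. By the universal property of the free Lie ring $F$, there is a unique Lie ring homomorphism $\tilde\beta:F\to B$ with $\tilde\beta(x)=b_x$ for all $x\in X$. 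I would then check $\tilde\beta$ factors through $q$, i.e.\ $\tilde\beta([F,R])=0$, so that it induces the desired $\beta:F/[F,R]\to B$.

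The key verification is that $\varphi\circ\tilde\beta=\alpha\circ\pi$ as maps $F\to C$. Both sides are Lie ring homomorphisms out of the free Lie ring $F$, and they agree on the generating set $X$ by construction of $b_x$; hence they are equal. From this, $\tilde\beta(R)\subseteq\ker\varphi=A$ immediately: if $r\in R$ then $\varphi(\tilde\beta(r))=\alpha(\pi(r))=\alpha(0)=0$. Since $A$ is central in $B$, we get $\tilde\beta([F,R])\subseteq[\tilde\beta(F),\tilde\beta(R)]\subseteq[B,A]=0$. Therefore $\tilde\beta$ vanishes on $[F,R]$ and descends to $\beta:F/[F,R]\to B$ with $\varphi\circ\beta\circ q = \varphi\circ\tilde\beta = \alpha\circ\pi$; since $q$ is surjective and $\pi$ factors through $q$, this says exactly that the right-hand square of the diagram commutes. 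Finally, $\beta$ maps $R/[F,R]$ into $A$ because $\tilde\beta(R)\subseteq A$, and this restriction is the map appearing on the left of the diagram, so that square commutes too (both composites $R/[F,R]\to B$ agree, and $R/[F,R]\to A\to B$ is the inclusion followed by $\beta|$).

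The main obstacle — really the only subtlety — is being careful that the set-theoretic lifts $b_x$ are genuinely available and that no compatibility among them needs to be imposed a priori: the point is precisely that freeness of $F$ lets us choose the $b_x$ completely arbitrarily subject only to $\varphi(b_x)=\alpha(\pi(x))$, and then \emph{all} the needed relations (commutation of the square, killing $[F,R]$) come out automatically from centrality of $A$ and the universal property, rather than having to be arranged by hand. One should also note in passing that $R/[F,R]$ is indeed central in $F/[F,R]$, so the top row is a central extension as asserted; this is what makes the argument $\tilde\beta([F,R])\subseteq[B,A]=0$ meaningful. No nontrivial computation is required beyond these observations.
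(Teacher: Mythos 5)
Your proposal is correct and follows essentially the same route as the paper: lift along $\varphi$ using the freeness of $F$ to get a homomorphism $F\to B$ covering $\alpha\circ\pi$, observe that it sends $R$ into $\ker\varphi=A$, use centrality of $A$ to kill $[F,R]$, and descend to $F/[F,R]$. The only difference is that you spell out the generator-by-generator construction of the lift, which the paper compresses into ``as $F$ is free, there is a homomorphism $f:F\to B$ making the square commute.''
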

\begin{proof}
 As $F$ is free, there is a homomorphism $f:F\to B$ such that the following diagram commutes:
\[
\begin{CD}
  F       @>>> L            \\
  @VV{f}V      @VV{\alpha}V \\
  B       @>{\varphi}>> C            \\
\end{CD}
\]
Thus $f$ maps the Lie ring $R$ into $\ker\varphi=A$.
Now for $x\in F$ and $y\in R$ we have $f[x,y]=[f(x),f(y)]=0$
since $f(y)\in A$ and $A$ is a central ideal of $B$. Thus $[F,R]\subseteq \ker f$.
It follows that $f$ induces a homomorphism $\beta:F/[F,R]\to B$ such that for $\gamma=\beta_{|_{R/[F,R]}}$,
the diagram
\[
\begin{CD}
  0 @>>> R/[F,R]       @>>> F/[F,R]     @>>>          L            @>>> 0 \\
  @.     @VV{\gamma}V       @VV{\beta}V               @VV{\alpha}V      @.\\
  0 @>>> A             @>>> B           @>{\varphi}>> C            @>>> 0 \\
\end{CD}
\]
is commutative.
\end{proof}
Consider the set $\Hom(L,A)$ of all homomorphisms from $L$ into the abelian Lie ring $A$.
It is routine to check that this set forms an abelian group under pointwise addition.
The identity element of $\Hom(L,A)$ is the zero homomorphism and the inverse
$-\alpha$ of $\alpha$ is given by $(-\alpha)(x)=-\alpha(x)$ for all $x\in L$.

Suppose $\beta:B\to L$ is a Lie ring homomorphism. Then we can assign to any homomorphism $\alpha:L\to A$ the homomorphism
$\alpha \circ\beta:B\to A$. Thus we obtain a map $\beta_*:\Hom(L,A)\to \Hom(B,A)$ with
$\beta_*(\alpha)=\alpha \circ\beta$. We easily verify that $\beta_*$ is a homomorphism. Similarly,
suppose that $\beta:A\to B$ is a homomorphism of abelian Lie rings. Then each homomorphism
$\alpha:L\to A$ determines the homomorphism $\beta \circ \alpha:L\to A$ which defines a homomorphism
$\beta^*:\Hom(L,A)\to \Hom(L,B)$.

The above definitions lead to the following lemmas.
\begin{lem}\label{9}
Let $0\to L_1\xto\alpha L_2\xto\beta L_3\to 0$ be a short exact
sequence. Then the induced sequence
$$0\to \Hom(L_3,A)\xto{\beta_*} \Hom(L_2,A)\xto{\alpha_*}\Hom(L_1,A)$$
is exact.
\end{lem}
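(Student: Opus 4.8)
The plan is to verify exactness of
\[
0\to \Hom(L_3,A)\xto{\beta_*} \Hom(L_2,A)\xto{\alpha_*}\Hom(L_1,A)
\]
at each of its two non-trivial spots, plus injectivity of $\beta_*$; this is the standard ``$\Hom(-,A)$ is left exact'' argument, adapted to Lie rings. Throughout I use that $\beta$ is surjective (so $\beta$ is an epimorphism in the relevant sense), that $\alpha$ is injective with $\im\alpha=\ker\beta$, and that all maps in sight are Lie ring homomorphisms into the abelian Lie ring $A$, so bracket conditions are automatic and only additivity matters.

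First I would show $\beta_*$ is injective: if $\gamma\in\Hom(L_3,A)$ satisfies $\gamma\circ\beta=0$, then $\gamma$ vanishes on $\im\beta=L_3$ (using surjectivity of $\beta$), so $\gamma=0$. Next, $\im\beta_*\subseteq\ker\alpha_*$: for $\gamma\in\Hom(L_3,A)$ we have $\alpha_*(\beta_*(\gamma))=\gamma\circ\beta\circ\alpha$, and $\beta\circ\alpha=0$ because $\im\alpha=\ker\beta$; hence the composite is the zero homomorphism. The reverse inclusion $\ker\alpha_*\subseteq\im\beta_*$ is the only step with any content: given $\delta\in\Hom(L_2,A)$ with $\delta\circ\alpha=0$, I must produce $\gamma\in\Hom(L_3,A)$ with $\gamma\circ\beta=\delta$. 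Since $\delta$ kills $\im\alpha=\ker\beta$, for each $c\in L_3$ pick (using surjectivity of $\beta$) some $b\in L_2$ with $\beta(b)=c$ and set $\gamma(c):=\delta(b)$; this is well defined because any two preimages of $c$ differ by an element of $\ker\beta$, on which $\delta$ vanishes. One then checks $\gamma$ is additive (pick preimages $b_1,b_2$ of $c_1,c_2$, so $b_1+b_2$ is a preimage of $c_1+c_2$, and use additivity of $\delta$) and that it respects brackets similarly; as $A$ is abelian this second check is also immediate (both sides are $0$). By construction $\gamma\circ\beta=\delta$, so $\delta\in\im\beta_*$.

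I do not expect a genuine obstacle here: the argument is purely diagram-theoretic and transfers verbatim from the category of abelian groups, since the only structure used beyond the abelian group structure of $\Hom$ is that $\beta$ is an epimorphism and $\im\alpha=\ker\beta$ as subgroups. The one place to be slightly careful is the well-definedness of the induced map $\gamma$ and the verification that $\gamma$ is indeed a Lie ring homomorphism rather than merely a map of sets, but because the codomain $A$ is an abelian Lie ring this reduces entirely to checking additivity, which is routine. I would present the exactness verification compactly, spending a sentence each on injectivity of $\beta_*$ and on $\im\beta_*\subseteq\ker\alpha_*$, and devoting the bulk to the construction of $\gamma$ in the step $\ker\alpha_*\subseteq\im\beta_*$.
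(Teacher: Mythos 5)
Your proposal is correct; the paper states Lemma~\ref{9} without proof, treating it as routine, and your argument is exactly the standard left-exactness argument for $\Hom(-,A)$ that it implicitly relies on: injectivity of $\beta_*$ from surjectivity of $\beta$, $\beta\circ\alpha=0$ for one inclusion, and factoring a homomorphism that kills $\im\alpha=\ker\beta$ through $L_3$ for the other. Your attention to well-definedness and to the (trivial, since $A$ is abelian) bracket compatibility of the induced map is precisely the right adaptation to the Lie ring setting.
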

\begin{lem}\label{10}
 Let $0\to A\xto{\alpha}B\xto{\beta}C\to 0$ be a short exact sequence of
abelian Lie rings. Then, for an arbitrary Lie ring $L$, the induced sequence
$$0\to \Hom(L,A)\xto{\alpha^*}\Hom(L,B)\xto{\beta^*}\Hom(L,C)$$
is exact.
\end{lem}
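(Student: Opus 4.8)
The plan is to recognize Lemma~\ref{10} as the left-exactness of the covariant functor $\Hom(L,-)$ on the category of abelian Lie rings, and to prove it by a routine diagram chase, verifying the three standard assertions: $\alpha^*$ is injective, $\beta^*\circ\alpha^*=0$, and $\ker\beta^*\subseteq\im\alpha^*$.

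First I would dispose of the two easy points, both of which follow purely formally from exactness of the given sequence. If $\phi\in\Hom(L,A)$ satisfies $\alpha^*(\phi)=\alpha\circ\phi=0$, then injectivity of $\alpha$ (it is the first nonzero map of a short exact sequence, so $\ker\alpha=0$) forces $\phi=0$; hence $\alpha^*$ is injective. For the composition, $\beta^*(\alpha^*(\phi))=\beta\circ\alpha\circ\phi=0$ because $\beta\circ\alpha=0$ by exactness at $B$; hence $\im\alpha^*\subseteq\ker\beta^*$.

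The remaining inclusion $\ker\beta^*\subseteq\im\alpha^*$ is the only step carrying any content. Given $\psi\in\Hom(L,B)$ with $\beta^*(\psi)=\beta\circ\psi=0$, we have $\psi(L)\subseteq\ker\beta=\alpha(A)$, and since $\alpha$ is injective it restricts to an isomorphism of abelian Lie rings from $A$ onto $\alpha(A)$; set $\phi:=\alpha^{-1}\circ\psi\colon L\to A$. This $\phi$ is additive as a composite of additive maps, and it is a Lie ring homomorphism because $A$ carries the zero bracket and, for $x,y\in L$, $\psi([x,y])=[\psi(x),\psi(y)]=0$ in the abelian ring $B$, so $\phi([x,y])=\alpha^{-1}(0)=0=[\phi(x),\phi(y)]$. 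Thus $\phi\in\Hom(L,A)$ and $\alpha^*(\phi)=\alpha\circ\phi=\psi$, so $\psi\in\im\alpha^*$, completing the argument.

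I do not expect a genuine obstacle here: the one point meriting a word of care is that the preimage map $\alpha^{-1}\circ\psi$ is a morphism of Lie rings and not merely of abelian groups, which is automatic once one observes that $A$, $B$, $C$ all have trivial bracket. If one prefers to sidestep the diagram chase, the same observation shows that every Lie ring homomorphism out of $L$ into an abelian Lie ring annihilates $L^2=[L,L]$, so $\Hom(L,A)\cong\Hom_{\Z}(L/L^2,A)$ naturally in $A$, and the statement reduces to the classical left-exactness of $\Hom_{\Z}(L/L^2,-)$ applied to the short exact sequence of abelian groups underlying $0\to A\to B\to C\to 0$.
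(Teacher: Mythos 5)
Your proof is correct: the paper states this lemma without proof as a routine consequence of the definitions, and your diagram chase (injectivity of $\alpha^*$, $\beta\circ\alpha=0$, and lifting $\psi$ with $\psi(L)\subseteq\ker\beta=\alpha(A)$ through the injective $\alpha$) is exactly the standard argument intended, with the right remark that the bracket condition is automatic since $A$, $B$, $C$ are abelian. Nothing is missing.
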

Now we introduce two maps $\Inf$ and $\Tra$. Suppose that $\beta:L\to B$ is a Lie ring homomorphism.
We define the \Defn{inflation map} $\Inf$ by
  \[
  \begin{array}{lll}
  \Inf:& H^2(B,A)\to H^2(L,A) ,& \overline{(f,g)} \mapsto \overline{(f',g')}, 
  \end{array}
  \]
where
$f'(x,y)=f(\beta(x),\beta(y))$ and $g'(x,y)=g(\beta(x),\beta(y))$ for all $x,y\in L$.
\begin{lem}
 The map $\Inf$ is a well-defined injective homomorphism.
\end{lem}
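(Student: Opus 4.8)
The plan is to establish three things in order: that $\Inf$ is well defined on cohomology classes, that it is a group homomorphism, and that it is injective. For well-definedness I would first check that, for $(f,g)\in Z^2(B,A)$, the pulled-back pair $(f',g')$ with $f'=f\circ(\beta\times\beta)$ and $g'=g\circ(\beta\times\beta)$ again lies in $Z^2(L,A)$. This is a routine check of the five conditions of Definition~\ref{defn:lie-ring-cocycles}: each identity for $(f',g')$ at a triple $x,y,z\in L$ is obtained by applying the corresponding identity for $(f,g)$ at $\beta(x),\beta(y),\beta(z)\in B$, the only inputs being that $\beta$ is additive and that $\beta[u,v]=[\beta u,\beta v]$. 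For instance, in condition~\eqref{defn:cocycle-bilinear-f} the correction term $g\bigl([\beta x,\beta z],[\beta y,\beta z]\bigr)$ becomes $g'\bigl([x,z],[y,z]\bigr)$ precisely because $[\beta x,\beta z]=\beta[x,z]$.

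Next I would show that coboundaries pull back to coboundaries: if $(f,g)\in B^2(B,A)$ arises from a map $t\colon B\to A$ with $t(0)=0$, then $(f',g')$ is exactly the coboundary arising from $t\circ\beta\colon L\to A$, which again sends $0$ to $0$; this is immediate from $\beta[u,v]=[\beta u,\beta v]$ and additivity of $\beta$. Hence $\Inf$ descends to a well-defined map $H^2(B,A)\to H^2(L,A)$, and it is a homomorphism because pulling back along $\beta$ commutes with the pointwise addition of the $f$- and $g$-components.

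The \emph{main obstacle} is injectivity. Suppose $\overline{(f,g)}$ lies in $\ker\Inf$, so that $(f',g')$ is the coboundary of some $s\colon L\to A$ with $s(0)=0$; the goal is to produce $t\colon B\to A$ with $t(0)=0$ whose coboundary is $(f,g)$. The natural attempt — which presupposes $\beta$ surjective, as it is in the intended application $\beta\colon L\to L/H$ — is to put $t(b):=s(x)$ for any preimage $x$ of $b$, so the whole matter reduces to showing that $s$ takes the same value on any two elements of $L$ with the same image under $\beta$. Two observations drive this: first, $s$ may be replaced by $s-\psi$ for any $\psi\in\Hom(L,A)$ without changing $(f',g')$, because such $\psi$ is additive and annihilates all brackets; second, feeding pairs involving $\ker\beta$ into the coboundary identities constrains $s$ on $\ker\beta$ — from the $g'$-identity on $(x,k)$ and on $(k,k')$ with $k,k'\in\ker\beta$ one obtains that $g(0,0)=0$, that $s|_{\ker\beta}$ is additive, and that $s(x+k)=s(x)+s(k)$, while from the $f'$-identity on $(k,y)$ one obtains $s[k,y]=0$, so that $s$ vanishes on $[\ker\beta,L]$. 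The crux, and the step I expect to be the main difficulty, is to combine these facts, after correcting $s$ by a suitable element of $\Hom(L,A)$, into the statement $s|_{\ker\beta}=0$, which is what makes $t$ well defined; this is the point where the hypotheses on $\beta$ must be used in an essential way. Granting that, checking that the coboundary of $t$ equals $(f,g)$ is a short direct computation using the relations $f'=f\circ(\beta\times\beta)$, $g'=g\circ(\beta\times\beta)$ and the surjectivity of $\beta$.
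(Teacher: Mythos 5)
Your treatment of well-definedness and of the homomorphism property is fine and is essentially the paper's argument (the paper only verifies that a coboundary of $t$ pulls back to the coboundary of $t\circ\beta$ and asserts the homomorphism property; your extra check that pulled-back cocycles satisfy the five conditions is a harmless supplement). The problem is the injectivity part. You leave the crux --- ``after correcting $s$ by a suitable $\psi\in\Hom(L,A)$ one gets $s|_{\ker\beta}=0$'' --- as an assumption, and this step cannot be carried out, because injectivity of $\Inf$ on $H^2$ is false in general, even in the intended application $\beta=\pi\colon L\to L/H$. Indeed, Theorem~\ref{thm:exact-sequence} of this very paper asserts exactness at $H^2(L/H,A)$, i.e.\ $\ker(\Inf)=\im(\Tra)$, and $\im(\Tra)\cong\Hom(H,A)/\im(\Res)$ is typically nonzero. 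Concretely, take $L=\Z/p^2\Z$ with trivial bracket, $H=pL$ and $A=\Z/p\Z$: the nonsplit central extension $0\to\Z/p\Z\to\Z/p^2\Z\to\Z/p\Z\to 0$ gives a nonzero class in $H^2(L/H,A)$ by Proposition~\ref{prop:h^2-vs-centext}, while its inflation to $H^2(L,A)$ vanishes, since the pulled-back extension is the fibre product $\{(b,l)\in\Z/p^2\Z\times\Z/p^2\Z : b\equiv l \bmod p\}$, which is split by $l\mapsto(l,l)$.

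Your own computation in fact locates exactly where the obstruction sits: the map $s|_{\ker\beta}$ you extract is a homomorphism $H\to A$ (note $[H,L]=0$, so the vanishing on brackets is automatic), and it can be absorbed by correcting $s$ with an element of $\Hom(L,A)$ precisely when it extends to a homomorphism on all of $L$, i.e.\ precisely when it lies in $\im(\Res)=\ker(\Tra)$; the failure of this is exactly what the transgression measures. So ``granting that'' grants something false. For comparison, the paper's own proof of this lemma establishes only well-definedness (via $t\circ\beta$) and states that the homomorphism property is easily seen; it never addresses injectivity, the injectivity claim is not used anywhere later, and it contradicts Theorem~\ref{thm:exact-sequence}. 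The correct conclusion is therefore not that you are missing an idea, but that the word ``injective'' in the statement should be dropped; your argument for the remaining assertions is adequate.
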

\begin{proof}
We first show that $\Inf$ is well-defined.
Assume that $\overline{(f_1,g_1)}=\overline{(f_2,g_2)}\in H^2(B,A)$. Consequently there is a map $t:B\to A$ such that
$(f_1-f_2)(x,y)=-t[x,y]$ and $(g_1-g_2)(x,y)=t(x)+t(y)-t(x+y)$ for all $x,y \in L$. Since $\beta$ is a homomorphism,
we conclude that
  \begin{align*}
    (f_1'-f_2')(x,y)
    &= (f_1-f_2)(\beta(x),\beta(y))=-t[\beta(x),\beta(y)]=-(t\circ\beta)[x,y],\\
    (g_1'-g_2')(x,y)
    &= (g_1-g_2)(\beta(x),\beta(y))=t(\beta(x))+t(\beta(y))-t(\beta(x)+\beta(y))\\
    &= (t\circ\beta)(x)+(t\circ\beta)(y)-(t\circ\beta)(x+y).
  \end{align*}
It follows that $\overline{(f_1',g_1')}=\overline{(f_2',g_2')}$ since $t\circ\beta$ is a map from $L$ to $A$ with $(t\circ\beta)(0)=0$.
It is easily seen that $\Inf$ is a homomorphism.
\end{proof}

Next we define \Defn{transgression map} $\Tra$. Let $0\to H\xto{i}L\xto{\alpha}B\to 0$ be a central
Lie ring extension. Choose a section $\lambda$ of $\alpha$ and define the corresponding cocycle $(f,g)$ as in Section \ref{sec:centext-to-cohom}, that is,
  \[
  \begin{array}{lll}
  f :& B\times B \to H  ,& (x,y) \mapsto [\lambda(x),\lambda(y)] - \lambda[x,y], \\
  g :& B\times B \to H  ,& (x,y) \mapsto \lambda(x)+\lambda(y) - \lambda(x+y)
  .
  \end{array}
  \]
Given any $\chi\in \Hom(H,A)$, it is straightforward to verify that  $(\chi \circ f,\chi \circ g)\in Z^2(B,A)$.
Accordingly, we define the $\Tra$ by
  \[
  \begin{array}{lll}
  \Tra:& \Hom(H,A)\to H^2(B,A) ,& \chi\mapsto \overline{(\chi \circ f, \chi \circ g)},
  \end{array}
  \]
which one readily verifies to be a well-defined homomorphism.

\begin{thm}\label{thm:exact-sequence}
Let $H$ be a central ideal of $L$, and let $0\to H\xto{i}L\xto{\pi}L/H\to 0$ be the corresponding natural
exact sequence. Then the induced sequence
\[
0 \to \Hom(L/H,A)\xto{\Inf}\Hom(L,A)\xto{\Res}\Hom(H,A)\xto{\Tra}H^2(L/H,A)\xto{\Inf}H^2(L,A)
\]
 is exact, where $\Res$ denotes the restriction map.
\end{thm}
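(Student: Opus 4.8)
The plan is to establish exactness at each of the four intermediate terms of the sequence
\[
0 \to \Hom(L/H,A)\xto{\Inf}\Hom(L,A)\xto{\Res}\Hom(H,A)\xto{\Tra}H^2(L/H,A)\xto{\Inf}H^2(L,A),
\]
drawing on the lemmas already at our disposal. Exactness at $\Hom(L/H,A)$ and at $\Hom(L,A)$ is precisely the content of Lemma~\ref{9} applied to the short exact sequence $0\to H\to L\to L/H\to 0$, noting that here $\Inf$ restricted to $\Hom$-groups coincides with $\pi_*$ and $\Res$ coincides with $i_*$; injectivity of $\Inf$ on $\Hom(L/H,A)$ is just injectivity of $\pi_*$, and $\ker\Res=\im\Inf$ follows because a homomorphism $L\to A$ kills $H$ iff it factors through $L/H$. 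So the first two exactness claims are essentially immediate.

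For exactness at $\Hom(H,A)$, I would show $\Tra\circ\Res=0$ and conversely that $\ker\Tra\subseteq\im\Res$. For the first inclusion, take $\alpha\in\Hom(L,A)$; then $\Res(\alpha)=\alpha\circ i$, and I claim the cocycle $((\alpha i)\circ f,(\alpha i)\circ g)$ is a coboundary. Using a section $\lambda$ of $\pi:L\to L/H$ and the map $t:=\alpha\circ\lambda:L/H\to A$ (with $t(0)=0$), a direct computation with the definitions of $f,g$ shows $(\alpha i)\circ f = -t[\cdot,\cdot]$ and $(\alpha i)\circ g = t(\cdot)+t(\cdot)-t(\cdot+\cdot)$ modulo the fact that $\alpha$ vanishes nowhere essential — more precisely, since $\alpha$ is a homomorphism, $\alpha(\lambda(x)+\lambda(y)-\lambda(x+y)) = \alpha\lambda(x)+\alpha\lambda(y)-\alpha\lambda(x+y)$ and $\alpha([\lambda(x),\lambda(y)]-\lambda[x,y]) = -\alpha\lambda[x,y]$. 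Hence $\Tra(\Res(\alpha))=0$. For the reverse inclusion, suppose $\chi\in\Hom(H,A)$ with $\Tra(\chi)=0$, so $(\chi f,\chi g)=(-t[\cdot,\cdot], t(\cdot)+t(\cdot)-t(\cdot+\cdot))$ for some $t:L/H\to A$. The map $\mu:=t\circ\pi + \chi\circ(\text{``difference with }\lambda\pi\text{''})$ — concretely, define $\rho:L\to A$ by $\rho(x)=\chi(x-\lambda\pi(x))+t(\pi(x))$, which makes sense because $x-\lambda\pi(x)\in H$ — should turn out to be a homomorphism $L\to A$ whose restriction to $H$ is $\chi$; verifying additivity and bracket-triviality of $\rho$ uses exactly the two coboundary equations. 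This gives $\chi=\Res(\rho)$.

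For exactness at $H^2(L/H,A)$, I need $\Inf\circ\Tra=0$ and $\ker\Inf\subseteq\im\Tra$. The composite $\Inf(\Tra(\chi))$ is represented by the cocycle $(\chi f\circ(\pi\times\pi),\chi g\circ(\pi\times\pi))$ on $L$; I would exhibit an explicit map $s:L\to A$ making this a coboundary, the natural candidate being $s(x)=\chi(\lambda\pi(x)-x)$ — again well-defined since $\lambda\pi(x)-x\in H$ — and check the two coboundary identities using that $\lambda$ is a section. Conversely, given $\overline{(f_0,g_0)}\in H^2(L/H,A)$ with $\Inf(\overline{(f_0,g_0)})=0$, there is $s:L\to A$ with $f_0(\pi x,\pi y)=-s[x,y]$ and $g_0(\pi x,\pi y)=s(x)+s(y)-s(x+y)$; from these relations one extracts a homomorphism $\chi:H\to A$ by restricting $s$ suitably (the relations force $s|_H$ to be additive and, since $H$ is central, bracket-trivial, hence a homomorphism) and then checks $\Tra(\chi)$ is cohomologous to $(f_0,g_0)$ by comparing with the cocycle coming from the section $\lambda$. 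The main obstacle I anticipate is bookkeeping: several of these steps require choosing a set-theoretic section $\lambda$ of $\pi$, relating the abstract cocycle $(f,g)$ of the extension $0\to H\to L\to L/H\to 0$ to an arbitrary representative cocycle $(f_0,g_0)$ on $L/H$, and tracking how the coboundary-defining map $t$ on $L/H$ lifts to $L$; none of the individual verifications is deep, but the interplay between the $\Hom$-level maps $\Inf$ (as $\pi_*$) and the cohomology-level $\Inf$, and between $\Res$ and $i_*$, needs to be handled carefully so the chain complex condition and the reverse inclusions line up. I would organize the write-up as four short lemmas, one per exactness point, to keep the section/bracket computations localized.
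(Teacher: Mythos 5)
Your proposal is correct, and its skeleton (Lemma~\ref{9} for exactness at $\Hom(L/H,A)$ and $\Hom(L,A)$, then explicit verification at the last two spots) matches the paper; but your treatment of exactness at $\Hom(H,A)$ is genuinely different in method. The paper argues extension-theoretically: for $\chi\in\Hom(H,A)$ it builds the central extension $0\to A\to B\to L/H\to 0$ representing $\Tra(\chi)$, constructs a homomorphism $\sigma:L\to B$, $\sigma(h+\mu(s))=\chi(h)+\lambda(s)$, and then identifies $\ker(\Tra)$ with the extendable characters via the split-extension criterion of Proposition~\ref{prop:h^2-vs-centext} (a splitting $\theta$ gives $B=A\oplus\theta(L/H)$ and hence an extension of $\chi$; conversely an extension $\chi'$ yields $\psi=-\chi'+\sigma$ factoring through $L/H$ and hence a splitting). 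You instead stay entirely at the cochain level: $\Tra\circ\Res=0$ is the two-line computation with $t=\alpha\circ\lambda$ (using that $\alpha$ kills brackets since $A$ is abelian), and for $\ker(\Tra)\subseteq\im(\Res)$ your explicit lift $\rho(x)=\chi(x-\lambda\pi(x))+t(\pi(x))$ does work: additivity follows from $\chi\circ g=t(\cdot)+t(\cdot)-t(\cdot+\cdot)$ and bracket-triviality from $\chi\circ f=-t[\cdot,\cdot]$ together with $[x,y]=f(\pi x,\pi y)+\lambda[\pi x,\pi y]$ (centrality of $H$). This buys you independence from the extension--cohomology correspondence of Section~\ref{sec:centext-to-cohom} at that point, at the cost of slightly more sign bookkeeping; the paper's route is more conceptual (kernel of transgression $=$ classes of split extensions). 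At $H^2(L/H,A)$ your computations are essentially the paper's: your $s(x)=\chi(\lambda\pi(x)-x)$ is the paper's $\theta(h+\mu(s))=-\chi(h)$.

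Two small points to fix in a full write-up. First, choose the section with $\lambda(0)=0$ so that $t(0)=0$ and $s(0)=0$, as the coboundary definition requires. Second, in the step $\ker(\Inf)\subseteq\im(\Tra)$, with the paper's sign conventions the homomorphism you extract must be $\chi=-s|_H$ (not $s|_H$), and the comparison of $(f_0,g_0)$ with $(\chi\circ f,\chi\circ g)$ requires splitting expressions like $s\bigl(f(u,v)+\lambda[u,v]\bigr)$; this uses the coboundary relation with one argument in $H$ together with $g_0(0,\cdot)=0$, which follows from cocycle axiom (4) and $s(0)=0$. Neither issue is a gap in the approach, only in the bookkeeping you already flagged.
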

\begin{proof}
 By Lemma~\ref{9}, we need only verify the exactness at $\Hom(H,A)$ and $H^2(L/H,A)$. Let $\mu$
denote a section of $\pi$ such that $\mu(0)=0$ and let $(f,g)$ be the corresponding cocycle, that is, $f(x,y)=[\mu(x),\mu(y)]-\mu[x,y]$ and $g(x,y)=\mu(x)+\mu(y)-\mu(x+y)$.
\begin{enumerate}
\item
Exactness at $\Hom(H,A)$. We first observe that $\im(\Res)$ consists  of all homomorphisms $\chi:H\to A$
that can be extended to a homomorphism $\chi':L\to A$. Another preliminary observation is the following.
Fix $\chi \in \Hom(H,A)$ and consider the central extension $0\to A\xto{i}B\xto{\beta}L/H\to 0$
whose cohomology class is $\Tra(\chi)$. Then we may choose a section, say $\lambda$, of $\beta$ such that
$(\chi \circ f)(s_1,s_2)=[\lambda(s_1),\lambda(s_2)]-\lambda[s_1,s_2]$ and $(\chi \circ g)(s_1,s_2)=\lambda(s_1)+\lambda(s_2)-\lambda(s_1+s_2)$
for $s_1,s_2\in L/H$. Consequently, we claim that the map $\sigma:L\to B$ defined by $\sigma(h+\mu(s))=\chi(h)+\lambda(s)$
for $h\in H$ and $s\in L/H$ is a homomorphism.
 Clearly $\sigma\mid_{H}=\chi$. For the rest of the proof, put $l_1,l_2\in L$ in the form
$l_i=h_i+\mu(s_i)$ for $i=1,2$ such that $h_i\in H$ and $s_i\in L/H$. First we show that $\sigma$ is a bilinear. We have
  \begin{align*}
    \sigma(l_1+l_2)
    &= \sigma(h_1+\mu(s_1)+h_2+\mu(s_2))\\
    &= \sigma(h_1+h_2+g(s_1,s_2)+\mu(s_1+s_2)\\
    &= \chi(h_1+h_2+g(s_1,s_2))+\lambda(s_1+s_2)\\
    &= \chi(h_1)+\chi(h_2)+\chi(g(s_1,s_2))+\lambda(s_1)+\lambda(s_2)-\chi(g(s_1,s_2))\\
    &= \chi(h_1)+\lambda(s_1)+\chi(h_2)+\lambda(s_2)\\
    &= \sigma(h_1+\mu(s_1))+\sigma(h_1+\mu(s_1))=\sigma(l_1)+\sigma(l_2).
  \end{align*}
So $\sigma$ is bilinear. Next we prove that $\sigma[l_1,l_2]=[\sigma(l_1),\sigma(l_2)]$.
Since $h_i\in H$ and $H\subseteq Z(L)$, we can check that $\sigma[l_1,l_2]=(\chi \circ f)(s_1,s_2)+\lambda[s_1,s_2]$.
On the other hand, by $\chi(h_i)\in Z(B)$ we can write
  \begin{align*}
    [\sigma(l_1),\sigma(l_2)]
    &= [\sigma(h_1\mu(s_1),\sigma(h_2+\mu(s_2))]
     =[\chi(h_1)+\lambda(s_1),\chi(h_2)+\lambda(s_2)]\\
    &= [\lambda(s_1),\lambda(s_2)]=\chi of(s_1,s_2)+\lambda[s_1,s_2].
  \end{align*}
Therefore, $\sigma[l_1,l_2]=[\sigma(l_1),\sigma(l_2)]$. Hence $\sigma$ is a homomorphism.

Now assume that $\chi \in \ker(\Tra)$. Thus there is a homomorphism $\theta:L/H\to B$
with $\beta \circ \theta=1_{L/H}$. Then $B=A\oplus\theta(L/H)$ and the composite map $L\to A\oplus\theta(L/H)\to A$
provides a homomorphism that extends $\chi$. Hence $\ker(\Tra)\subseteq \im(\Res)$.

Conversely, let $\chi\in \im(\Res)$, so there is a $\chi'\in \Hom(L,A)$ such that $\chi'_{|H}=\chi$. Consider the map
$\psi:L\to B$ defined by $\psi(l)=-\chi'(l)+\sigma(l)$. Obviously $\psi$ is a homomorphism.
Now for $l\in L$, we have
  \begin{align*}
    \psi(l)
    &= \psi(h+\mu(s))=-\chi'(h+\mu(s))+\sigma(h+\mu(s))\\
    &= -\chi'(h)-\chi'(\mu(s))+\chi(h)+\lambda(s)=-\chi(h)-\chi'(\mu(s))+\chi(h)+\lambda(s)\\
    &= -\chi'(\mu(s)) +\lambda(s).
  \end{align*}
Since $\psi(l)=-\chi'(\mu(s)) +\lambda(s)$, then $\psi$ factors through $H$ and therefore determines a
homomorphism $\psi':L/H\to B$. If we define $\theta:L/H\to L/H$ by putting $\theta(s)=\mu(s)+H$
then $\beta \circ (\psi'\circ\theta)=1_{L/H}$. It follows that $\chi\in \ker(\Tra)$ so $\im(\Res)\subseteq \ker(\Tra)$.

\item
Exactness at $H^2(L/H,A)$. Assume that $\overline{(f_1,g_1)}\in \ker(\Inf)$, that is,
there is some $\gamma:L\to A$ with $\gamma(0)=0$ such that
$f_1(l_1+H,l_2+H)=-\gamma[l_1,l_2]$ and $g_1(l_1+H,l_2+H)=\gamma(l_1)+\gamma(l_2)-\gamma(l_1+l_2)$.
If $l_1\in H$ and $l_2\in H$ then $f_1(l_1+H,l_2+H)=0$ and $g_1(l_1+H,l_2+H)=0$ hence
$\gamma(l_1)+\gamma(l_2)=\gamma(l_1+l_2)$. Therefore the map $\chi:H\to A$ defined by
$\chi(h)=-\gamma(h)$ is a homomorphism. Set $f_2=\chi \circ f$ , $g_2=\chi \circ g$ and $t=\gamma \circ \mu$. By definition of
$f_1$ and $g_1$, we obtain $\gamma[\mu(s_1),\mu(s_2)]=-\lambda[s_1,s_2]+t[s_1,s_2]$ and
$\gamma(\mu(s_1 )+\mu(s_2))=-(\chi \circ g)(s_1,s_2)+t(s_1+s_2)$. Then we have
  \begin{align*}
    f_1(s_1,s_2)
    &= -\gamma[\mu(s_1),\mu(s_2)]=-\lambda[s_1,s_2]+t[s_1,s_2]\\
    &= -(\chi \circ f)(s_1,s_2)+t[s_1,s_2]\\
    g_1(s_1,s_2)
    &= \gamma(\mu(s_1))+\gamma(\mu(s_2))-\gamma(\mu(s_1)+\mu(s_2))\\
    &= \gamma(\mu(s_1))+\gamma(\mu(s_2))+(\chi \circ g)(s_1,s_2)-t(s_1+s_2)\\
    &= t(s_1)+t(s_2)-t(s_1+s_2)+(\chi \circ g)(s_1,s_2)
    .
  \end{align*}
Therefore, $\Tra(\chi)=\overline{(f_2,g_2)}=\overline{(f_1,g_1)}$, so $\ker(\Inf)\subseteq \im(\Tra)$.\\
Finally, any element of $\im(\Tra)$ has the form $\overline{(f_1,g_1)}$, where $f_1=\chi \circ f$ and
$g_1=\chi \circ g$ for some $\chi \in \Hom(H,A)$. Then $\Inf(\overline{(f_1,g_1)})=\overline{(f_2,g_2)}$ where
$f_2(l_1,l_2)=f_1(l_1+H,l_2+H)=f_1(s_1,s_2)$ and $g_2(l_1,l_2)=g_1(l_1+H,l_2+H)=g_1(s_1,s_2)$.
Next consider the mapping $\theta:L\to A$ defined by $\theta(h+\mu(s))=-\chi(h)$.
It is easy to show $[l_1,l_2]=f(s_1,s_2)+\mu[s_1,s_2]$. By definitions of $f_2$ and $g_2$, we deduce that
$f_2(l_1,l_2)=-\theta[l_1,l_2]$ and $g_2(l_1,l_2)=\theta(l_1)+\theta(l_2)-\theta(l_1+l_2)$.
Hence $\im(\Tra)\subseteq \ker(\Inf)$ as asserted.
\qedhere
\end{enumerate}
\end{proof}

\section{A Hopf formula for second cohomolgy}  \label{sec:schur-mult}

We now prove a version of Hopf's formula for the second cohomology of finite Lie ring.
The results and proofs found here unsurprisingly are very similar to
the corresponding ones in the theory of (finite) groups and also of Lie algebras.
\medskip

A Lie ring $L$ is said to be \Defn{divisible} if  for every positive integer $n$ and every $x\in L$, there exists an element $y\in L$ such that $ny=x$.

\begin{lem}\label{14}
 Suppose that $A$, $B$ and $C$ are abelian Lie rings such that $B\subseteq A$ and $C$ is divisible. Then any homomorphism
$B\to C$ extends to a homomorphism $A \to C$.
\end{lem}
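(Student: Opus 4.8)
The plan is to recognise this as the Lie ring incarnation of the classical fact that divisible abelian groups are injective $\Z$-modules, and to run the standard Zorn's lemma argument. First I would observe that since $A$, $B$ and $C$ are abelian, all three brackets vanish identically; hence a map between any two of them automatically respects the bracket (the identity $\phi([x,y])=[\phi(x),\phi(y)]$ just reads $\phi(0)=0$), so Lie ring homomorphisms between them coincide with homomorphisms of the underlying abelian groups. It therefore suffices to extend the given group homomorphism $\phi\colon B\to C$ along the inclusion $B\subseteq A$.

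To do this I would form the set $\mathcal{P}$ of all pairs $(B',\phi')$ with $B\subseteq B'\subseteq A$ a subgroup and $\phi'\colon B'\to C$ a homomorphism restricting to $\phi$ on $B$, partially ordered by declaring $(B',\phi')\le(B'',\phi'')$ to mean $B'\subseteq B''$ and $\phi''|_{B'}=\phi'$. This poset is nonempty, since it contains $(B,\phi)$, and the union of any chain in $\mathcal{P}$ is again a member of $\mathcal{P}$ bounding the chain; so Zorn's lemma produces a maximal element $(B_0,\phi_0)$.

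The heart of the argument is to show $B_0=A$. Assuming otherwise, choose $a\in A\setminus B_0$ and set $I=\{\,n\in\Z : na\in B_0\,\}$, a subgroup of $\Z$, say $I=m\Z$ with $m\ge 0$. If $m=0$ then $B_0\cap\langle a\rangle=0$, so I extend $\phi_0$ to $B_0+\langle a\rangle$ by sending $a\mapsto 0$ (the representation $b+na$ being unique). If $m>0$ then $ma\in B_0$; using divisibility of $C$ I pick $c\in C$ with $mc=\phi_0(ma)$ and define $\phi_1(b+na):=\phi_0(b)+nc$ for $b\in B_0$ and $n\in\Z$. In both cases $\phi_1$ is a homomorphism on $B_0+\langle a\rangle$ properly extending $\phi_0$; the one nontrivial point is well-definedness of $\phi_1$ when $m>0$, which reduces to observing that if $b+na=b'+n'a$ with $b,b'\in B_0$, then $b-b'=(n'-n)a\in B_0$ forces $n'-n=km$ for some $k\in\Z$, whence $\phi_0(b)-\phi_0(b')+(n-n')c=k\phi_0(ma)-kmc=k(mc)-kmc=0$. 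This contradicts the maximality of $(B_0,\phi_0)$, so $B_0=A$ and $\phi_0$ is the sought extension.

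I do not anticipate a genuine obstacle here: this is Baer's injectivity criterion specialised to $\Z$, transplanted to abelian Lie rings via the trivial observation that the bracket conditions are vacuous. The only places requiring any care are the verification that $\mathcal{P}$ is closed under unions of chains (immediate) and the well-definedness of $\phi_1$ in the case $ma\in B_0$, which is the one-line computation above.
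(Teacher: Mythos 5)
Your proof is correct and follows essentially the same route as the paper, which simply defers to the standard argument in Karpilovsky (Lemma 2.1.6): reduce to abelian groups since all brackets vanish, then run the Zorn's lemma / Baer-style extension argument using divisibility of $C$. You have merely written out in full what the paper cites.
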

\begin{proof}
The proof is similar to that of \cite[Lemma 2.1.6]{Karpilovsky:1987ta}.
 \end{proof}

The following is an adaption of a well-known statement for groups,
originally proven in \cite{Schur:1904dk}.
\begin{prop}\label{15}
Let $K$ be a central ideal of $L$ such that $L^2\cap K$ is finite. Then $L^2\cap K$ is isomorphic to the
image of the transgression map $\Tra:\Hom(K,\C^*)\to H^2(L/K,\C^*)$. In particular,
\begin{enumerate}
\item $L^2\cap K$ is isomorphic to a Lie subring of $H^2(L/K,\C^*)$,
\item $L^2\cap K\cong H^2(L/K,\C^*)$ if the map above is surjective.
\end{enumerate}
 \end{prop}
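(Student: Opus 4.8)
The plan is to extract the image of $\Tra$ directly from the five-term exact sequence of Theorem~\ref{thm:exact-sequence}, applied to the central extension $0 \to K \xto{i} L \xto{\pi} L/K \to 0$. Exactness at $\Hom(K,\C^*)$ gives $\ker(\Tra) = \im(\Res)$, so the first isomorphism theorem yields $\im(\Tra) \cong \Hom(K,\C^*)/\im(\Res)$. Thus the whole problem reduces to identifying the subgroup $\im(\Res) \leq \Hom(K,\C^*)$ and then computing the quotient.

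The key step is to show that $\im(\Res)$ equals the set of homomorphisms $\chi \colon K \to \C^*$ vanishing on $L^2 \cap K$. The inclusion "$\subseteq$" is immediate: if $\chi = \chi'|_K$ for some $\chi' \in \Hom(L,\C^*)$, then $\chi'$ kills $L^2$ because $\C^*$ is abelian, hence $\chi$ kills $L^2 \cap K$. For "$\supseteq$", given $\chi$ with $\chi|_{L^2\cap K}=0$, one factors $\chi$ through $K/(L^2\cap K) \cong (K+L^2)/L^2$, which is a subgroup of the abelian Lie ring $L/L^2$; since $\C^*$ is divisible, Lemma~\ref{14} extends this character to all of $L/L^2$, and pulling back along the projection $L \to L/L^2$ produces a homomorphism $L \to \C^*$ whose restriction to $K$ is $\chi$.

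Next I would note that the restriction homomorphism $\rho \colon \Hom(K,\C^*) \to \Hom(L^2\cap K,\C^*)$ has kernel exactly $\im(\Res)$ by the previous step, and is surjective by another application of Lemma~\ref{14} (divisibility of $\C^*$ lets one extend any character of $L^2\cap K$ to $K$). Hence $\im(\Tra) \cong \Hom(K,\C^*)/\im(\Res) \cong \Hom(L^2\cap K,\C^*)$. Finally, $L^2\cap K$ is a finite abelian group, so it is (non-canonically) isomorphic to its character group $\Hom(L^2\cap K,\C^*)$, giving $\im(\Tra) \cong L^2\cap K$. Part (1) is then immediate, as $\im(\Tra)$ is a subgroup, hence an abelian Lie subring, of $H^2(L/K,\C^*)$; part (2) follows because surjectivity of $\Tra$ forces $\im(\Tra) = H^2(L/K,\C^*)$.

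The main obstacle will be the extension argument in the second paragraph: unlike in the vector-space (Lie algebra) setting, $L^2$ admits no canonical complement in $L$, so the extension of a character from $K$ to $L$ cannot be produced by splitting off $L^2$. Instead the argument routes through the abelianization $L/L^2$ and relies essentially on the divisibility of $\C^*$ through Lemma~\ref{14}. One must also be careful that the extended character restricts to $\chi$ on all of $K$, not merely on $L^2\cap K$, which is precisely why one factors through $(K+L^2)/L^2$ rather than through $L^2\cap K$ itself.
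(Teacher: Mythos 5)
Your argument is correct and follows essentially the same route as the paper: both extract $\im(\Tra)\cong\Hom(K,\C^*)/\im(\Res)$ from the exact sequence of Theorem~\ref{thm:exact-sequence}, identify $\im(\Res)$ with the kernel of the restriction $\Hom(K,\C^*)\to\Hom(L^2\cap K,\C^*)$ via the factorization through $(K+L^2)/L^2$ and Lemma~\ref{14}, and conclude with surjectivity of that restriction plus finite abelian duality. No substantive differences.
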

\begin{proof}
The proof is closely based on that of \cite[Lemma 2.1.7]{Karpilovsky:1987ta}.
Let $0\to K\to L\to L/K \to 0$ be the natural exact sequence. By Theorem~\ref{thm:exact-sequence},
the associated sequence
\[\Hom(L,\C^*)\xto{\Res}\Hom(K,\C^*)\xto{\Tra}H^2(L/K,\C^*)\]
is exact. Thus, if $H:=\im(\Res)$ denotes the  Lie subring of $\Hom(K,\C^*)$ consisting of all $\chi$ that can
be extended to a homomorphism $L\to \C^*$, then $H=\ker(\Tra)$ and so $\Hom(K,\C^*)/H\cong \im(\Tra)$. It remains to
prove that $L^2\cap K\cong \Hom(K,\C^*)/H$. Since $\Hom(L^2\cap K,\C^*)\cong L^2\cap K$, we only need to show that
$\Hom(K,\C^*)/H\cong \Hom(L^2\cap K,\C^*)$. By Lemma~\ref{14}, we conclude that the restriction map
$\psi:\Hom(K,\C^*)\to\Hom(L^2\cap K,\C^*)$ is surjective. It remains to show that $\ker\psi=H$.

Suppose now that $\chi \in H$. Then there is $\chi^*\in \Hom(L,\C^*)$ with $\chi^*\mid_{K}=\chi$.
As $\C^*$ is abelian, we have $L^2\subseteq \ker\chi^*$ and thus $L^2\cap K\subseteq \ker\chi$. 
Conversely, suppose $\chi\in\ker\psi$, thus $L^2\cap K\subseteq \ker\chi$.
By the isomorphism theorem, ${K}/(L^2\cap K) \cong {(K+L^2)}/{L^2}$ holds,
thus $\chi$ induces a homomorphism ${(K+L^2)}/{L^2}\to \C^*$. Due to
Lemma~\ref{14}, this can be extend to a homomorphism $L/L^2\to \C^*$ and hence to a homomorphism
$\chi^*:L\to \C^*$. One readily verifies that $\chi^*\mid_{K}=\chi$.
It follows that $H$ is the kernel of the homomorphism $\psi$.
\end{proof}

\begin{lem}\label{16}
 Let $L\cong {F}/{R}$, where $F$ is a free Lie ring. Then for any abelian Lie ring $A$, the transgression
map $\Hom({R}/{[F,R]},A)\to H^2(L,A)$ associated with the natural exact sequence
$0\to {R}/{[F,R]}\to{F}/{[F,R]}\xto{\varphi}L\to 0$ is surjective.
\end{lem}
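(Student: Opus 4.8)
The plan is to produce, for every cohomology class $c\in H^2(L,A)$, an explicit preimage under $\Tra$ by exploiting the lifting property of Lemma~\ref{8}. First I would use Proposition~\ref{prop:h^2-vs-centext} to realise $c$ as the class $C_E$ of a central extension $E\colon 0\to A\to B\xto{\beta}L\to 0$. Applying Lemma~\ref{8} to this extension together with the identity homomorphism $\id_L\colon L\to L$ produces a homomorphism $\Phi\colon F/[F,R]\to B$ with $\beta\circ\Phi=\varphi$, and whose restriction to $R/[F,R]$ lands in $A$; call this restriction $\chi\colon R/[F,R]\to A$. This $\chi$ is the candidate preimage, and the remaining task is just to verify that $\Tra(\chi)=c$.

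To do that, I would compute $C_E$ using a section adapted to $\Phi$. Fix a section $\lambda$ of $\varphi$ with $\lambda(0)=0$, and let $(f_0,g_0)$ be the associated $R/[F,R]$-valued cocycle appearing in the definition of $\Tra$, so that $\Tra(\chi)=\overline{(\chi\circ f_0,\chi\circ g_0)}$. Then $\mu:=\Phi\circ\lambda$ is a section of $\beta$ with $\mu(0)=0$, since $\beta\circ\mu=\beta\circ\Phi\circ\lambda=\varphi\circ\lambda=\id_L$. Because $\Phi$ is a Lie ring homomorphism, the cocycle attached to $E$ via $\mu$ is $f(x,y)=[\mu(x),\mu(y)]-\mu[x,y]=\Phi\bigl([\lambda(x),\lambda(y)]-\lambda[x,y]\bigr)=\Phi(f_0(x,y))$ and likewise $g(x,y)=\Phi(g_0(x,y))$; and since $f_0,g_0$ take values in $R/[F,R]$, where $\Phi$ agrees with $\chi$, this cocycle is literally $(\chi\circ f_0,\chi\circ g_0)$ — no coboundary correction is needed. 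Hence $C_E=\overline{(\chi\circ f_0,\chi\circ g_0)}=\Tra(\chi)$, and since the cohomology class of $E$ does not depend on the chosen section, $c=C_E=\Tra(\chi)$. As $c$ was arbitrary, $\Tra$ is surjective.

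I expect no serious obstacle: the only substantive ingredient is the existence of the lift $\Phi$ with $\Phi(R/[F,R])\subseteq A$, and that is exactly what Lemma~\ref{8} supplies (via freeness of $F$ and the fact that $[F,R]$ is annihilated). The rest is a direct unravelling of the definitions of $\Tra$ and of the cocycle of a central extension, made painless by the fact that $\Phi$ is a genuine homomorphism. The one point to be careful about is that $C_E$ must here be computed with the specific section $\mu=\Phi\circ\lambda$ rather than an arbitrary one, which is legitimate precisely because of the earlier lemma asserting independence of the cohomology class from the choice of section.
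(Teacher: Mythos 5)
Your argument is correct and is essentially the paper's own proof: realise the class as a central extension, lift the free presentation through it via Lemma~\ref{8}, and observe that composing a section of $\varphi$ with the lift gives a section of the extension whose cocycle is exactly $\chi$ applied to the transgression cocycle, so the class is $\Tra(\chi)$. The only differences are notational (the paper calls the lift $\theta$, its restriction $\gamma$, and swaps the roles of the letters $\lambda$ and $\mu$), so nothing further is needed.
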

\begin{proof}
 Fix $\overline{(f,g)}\in H^2(L,A)$ and a central extension $0\to A\to L^*\xto{\psi}L\to 0$
associated with $\overline{(f,g)}$. Then by Theorem~\ref{8}, there is a homomorphism $\theta:{F}/{[F,R]}\to L^*$
such that for $\gamma:= \theta\mid_{R/[F,R]}$ the diagram
\[
\begin{CD}
  0 @>>> R/[F,R]       @>>> F/[F,R]     @>{\varphi}>>          L            @>>> 0 \\
  @.     @VV{\gamma}V       @VV{\theta}V               @|      @.\\
  0 @>>> A             @>>> L^*           @>{\psi}>> L            @>>> 0 \\
\end{CD}
\]\\
commutes. Let $\mu$ be a section of $\varphi$. Then by commutativity
of the diagram, $\lambda:=\theta \circ \mu$ is
a section of $\psi$ and therefore the cocycle $(f_1,g_1)\in Z^2(L,A)$
defined by $f_1(x,y)=[\lambda(x),\lambda(y)]-\lambda[x,y]$ and
$g_1(x,y)=\lambda(x)+\lambda(y)-\lambda(x+y)$ for $x,y\in L$ is
cohomologous to $(f,g)$. It is clear that
$f_1(x,y)=\gamma([\mu(x),\mu(y)]-\mu[x,y])$ and
$g_1(x,y)=\gamma(\mu(x)+\mu(y)-\mu(x+y))$ for $x,y\in L$.
Thus, $\overline{(f,g)}$ is the image of $\gamma$ under the transgression map.
\end{proof}
\begin{lem}[{\cite[Lemma 7]{Eick/Horn/Zandi}}] \label{lem:L2-finite}
Let $L$ be a Lie ring. If $Z(L)$ has finite index in $L$, then $L^2$ is finite.
More precisely, if the additive group of $L/Z(L)$ has rank $n$ and exponent $e$, then $L^2$ has rank at most $n(n-1)/2$
and its exponent divides $e$.
\end{lem}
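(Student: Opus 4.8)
The plan is to run the classical Schur-type argument in the additive/bracket language of Lie rings. Since $Z(L)$ has finite index, $L/Z(L)$ is a finite abelian group; I would fix a generating set $\bar x_1,\dots,\bar x_n$ of minimal size $n$ and lift it to elements $x_1,\dots,x_n\in L$. Because the cosets $\bar x_i$ generate $L/Z(L)$, every $u\in L$ can be written as $u=z+\sum_{i=1}^n a_ix_i$ with $z\in Z(L)$ and $a_i\in\Z$.

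The key computation is that of an arbitrary commutator. For $u=z+\sum a_ix_i$ and $v=z'+\sum b_jx_j$, bilinearity together with the vanishing of any bracket involving a central element gives $[u,v]=\sum_{i,j}a_ib_j[x_i,x_j]=\sum_{i<j}(a_ib_j-a_jb_i)[x_i,x_j]$, where I use $[x_i,x_i]=0$ and $[x_j,x_i]=-[x_i,x_j]$. Hence every commutator lies in the additive subgroup $T:=\sum_{i<j}\Z[x_i,x_j]$, which is generated by at most $\binom{n}{2}=n(n-1)/2$ elements. Combined with the elementary observation that the additive span of all commutators of a Lie ring is already a subring---since $[[u,v],w]$ is again a commutator---this yields $L^2=T$, so $L^2$ is generated as an abelian group by at most $n(n-1)/2$ elements, and in particular has rank at most $n(n-1)/2$.

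It remains to bound the additive orders of the generators. From $e\bar x_i=0$ in $L/Z(L)$ we get $ex_i\in Z(L)$, and therefore $e[x_i,x_j]=[ex_i,x_j]=0$. Thus each $[x_i,x_j]$ has order dividing $e$; being abelian and generated by finitely many elements of order dividing $e$, the group $L^2$ is finite, its exponent divides $e$, and its rank is at most $n(n-1)/2$, as claimed.

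The argument is short; the one place requiring a little care is recognising that $L^2$ is exactly the additive span $T$ of the $[x_i,x_j]$---that is, that iterated brackets produce nothing new---and, preceding that, the reduction of a general commutator to a $\Z$-combination of the $[x_i,x_j]$, which rests entirely on the fact that a bracket against a central element vanishes. I do not expect a substantive obstacle beyond this bookkeeping.
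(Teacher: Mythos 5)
Your argument is correct: reducing an arbitrary commutator to a $\Z$-combination of the $[x_i,x_j]$ via bilinearity and centrality, noting that the additive span of all commutators is already a subring so that $L^2=T$, and bounding orders by $e[x_i,x_j]=[ex_i,x_j]=0$ is exactly the standard Schur-type argument. The paper itself gives no proof but delegates to \cite[Lemma 7]{Eick/Horn/Zandi}, whose proof proceeds along essentially these same lines, so your proposal matches the intended argument.
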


\begin{thm} \label{thm:H^2-from-FR}
 Let $L=F/R$ be a finite Lie ring, where $F$ is a free Lie ring. Then
$H^2(L,\C^*)=(F^2\cap R)/{[F,R]}$ and $H^2(L,\C^*)$ is finite.
\end{thm}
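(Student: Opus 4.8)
The plan is to apply Proposition~\ref{15} with $K = R/[F,R]$ and the Lie ring $L^* = F/[F,R]$, so that $L^*/K \cong L$. First I would observe that $K = R/[F,R]$ is a central ideal of $L^* = F/[F,R]$, as already noted before Lemma~\ref{8}. The crucial point is to identify $(L^*)^2 \cap K$ inside $F/[F,R]$: since $(F/[F,R])^2 = (F^2 + [F,R])/[F,R]$, one computes
\[
(L^*)^2 \cap K = \big((F^2 + [F,R])/[F,R]\big) \cap \big(R/[F,R]\big) = \big((F^2 + [F,R]) \cap R\big)/[F,R] = (F^2 \cap R)/[F,R],
\]
where the last equality uses $[F,R] \subseteq R$ together with the modular law. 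So the target of the theorem is exactly $(L^*)^2 \cap K$.

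Next I would check that $(L^*)^2 \cap K$ is finite, which is needed to invoke Proposition~\ref{15}. For this I would use Lemma~\ref{lem:L2-finite}: since $L = L^*/K$ is finite and $K$ is central in $L^*$, the center $Z(L^*)$ contains $K$ and hence has finite index in $L^*$; therefore $(L^*)^2$ is finite, and a fortiori so is $(L^*)^2 \cap K$. With finiteness in hand, Proposition~\ref{15} applies to the central ideal $K$ of $L^*$ and gives that $(L^*)^2 \cap K$ is isomorphic to the image of the transgression map $\Tra : \Hom(K, \C^*) \to H^2(L^*/K, \C^*) = H^2(L, \C^*)$.

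Finally I would upgrade ``isomorphic to the image of $\Tra$'' to ``isomorphic to all of $H^2(L,\C^*)$'' by showing that this particular transgression map is surjective. But that is precisely the content of Lemma~\ref{16}, applied with $A = \C^*$: the transgression associated with the natural central extension $0 \to R/[F,R] \to F/[F,R] \to L \to 0$ is onto. Combining, $H^2(L,\C^*) \cong \im(\Tra) \cong (L^*)^2 \cap K = (F^2 \cap R)/[F,R]$, and this group is finite by the argument above. The only step requiring genuine care is the identification $(L^*)^2 \cap K = (F^2 \cap R)/[F,R]$; everything else is an assembly of the lemmas already proved, so I expect the modular-law manipulation — and making sure the isomorphisms from Propositions~\ref{15} and Lemma~\ref{16} refer to the same transgression map — to be the main (though mild) obstacle.
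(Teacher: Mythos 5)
Your proposal is correct and follows essentially the same route as the paper: finiteness of $\overline{F}^2=(F/[F,R])^2$ via Lemma~\ref{lem:L2-finite}, surjectivity of the transgression for $0\to R/[F,R]\to F/[F,R]\to L\to 0$ via Lemma~\ref{16}, and then Proposition~\ref{15}(ii) to get $H^2(L,\C^*)\cong \overline{F}^2\cap\overline{R}=(F^2\cap R)/[F,R]$. The only cosmetic difference is your modular-law computation of $\overline{F}^2\cap\overline{R}$, where the paper simply notes $[F,R]\subseteq F^2$ so that $\overline{F}^2=F^2/[F,R]$.
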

\begin{proof}
 Set $\overline{R}={R}/{[F,R]}$ and $\overline{F}={F}/{[F,R]}$. It is clear that $\overline{R}$ is a central ideal
of finite index in $\overline{F}$. By Lemma~\ref{lem:L2-finite}, it follows 
that $\overline{F}^2$ is finite, hence so is
$\overline{R}\cap \overline{F}^2$. Also, by Lemma~\ref{16} the transgression map
$\Hom(\overline{R},\C^*)\to H^2(L,\C^*)$ associated with the central extension
$0\to \overline{R}\to\overline{F}\to L\to 0$ is surjective.
Consequently, Proposition~\ref{15} (ii) implies that
$H^2(L,\C^*)\cong \overline{R}\cap \overline{F}^2$. As $[F,R]\subseteq F^2$, we have
$\overline{F}^2={F^2}/{[F,R]}$ whence as claimed
$\overline{R}\cap \overline{F}^2=(F^2\cap R)/{[F,R]}$.
\end{proof}


\section*{Acknowledgments}
The authors are indebted to Professor A. Jamali  for his helpful suggestions,
to Bettina Eick for bringing them together and drawing their attention to this
topic in the first place, and also the referee for valuable comments that helped
to improve the exposition.



\bigskip
\bigskip

{\footnotesize \par\noindent{Max Horn}\; \\
{Mathematisches Institut},
{Justus-Liebig-Universit{\"a}t Gie{\ss}en, }\\
{Arndtstrasse 2, D-35392 Gie{\ss}en, Germany}\\
Email: \url{max.horn@math.uni-giessen.de}\\

{\footnotesize \par\noindent{Seiran Zandi}\; \\
{Department of Mathematics}, {University of Kharazmi}, \\
{P.O.Box 15614,} {Tehran, Iran}\\
Email: \url{seiran.zandi@gmail.com}\\

\end{document}